\numberwithin{equation}{section}
\theoremstyle{plain}
\newtheorem{theorem}{Theorem}[section]
\newtheorem{prop}[theorem]{Proposition}
\theoremstyle{definition}
\newtheorem{rem}[theorem]{Remark}
\def\X{\mathbb{X}}
\title[On fractional inequalities on metric measure spaces]{On fractional inequalities on metric measure spaces with polar decomposition}
\author[A. Kassymov]{Aidyn Kassymov}
\address{
  Aidyn Kassymov:
  \endgraf
  Institute of Mathematics and Mathematical Modeling
  \endgraf
  28 Shevchenko str.
  \endgraf
  050010 Almaty
  \endgraf
  Kazakhstan
  \endgraf
	{\it E-mail address}  {\rm kassymov@math.kz}}
\author[M. Ruzhansky]{Michael Ruzhansky}
\address{
  Michael Ruzhansky:
  \endgraf
  Department of Mathematics: Analysis, Logic and Discrete Mathematics
  \endgraf
  Ghent University, Belgium
  \endgraf
 and
  \endgraf
  School of Mathematical Sciences
  \endgraf
  Queen Mary University of London
  \endgraf
  United Kingdom
  \endgraf
  {\it E-mail address} {\rm michael.ruzhansky@ugent.be}
  }
\author[G. Zaur]{Gulnur Zaur}
\address{
Gulnur Zaur:
  \endgraf
  Institute of Mathematics and Mathematical Modeling
  \endgraf
  28 Shevchenko str.
  \endgraf
  050010 Almaty
  \endgraf
  Kazakhstan
  \endgraf
  and 
  \endgraf
  Al-Farabi Kazakh National University
  \endgraf
  71 Al-Farabi ave.
  \endgraf
  050040 Almaty
  \endgraf
  Kazakhstan
  \endgraf
	{\it E-mail address}  {\rm z.gulnur.t@gmail.com}
		}
\begin{document}
\thanks{AK and MR are supported by the FWO Odysseus 1 grant G.0H94.18N: Analysis and Partial Differential Equations and by the Methusalem programme of the Ghent University Special Research Fund (BOF) (Grant number 01M01021). MR is also supported by EPSRC grants EP/R003025/2 and EP/V005529/1. AK and GZ are supported by  the Science Committee of the Ministry of Science and Higher Education of the Republic of  Kazakhstan (Grant No. AP23484106)\\
\indent
{\it Keywords:} metric measure spaces; polar decomposition; fractional Hardy inequality; fractional Hardy-Sobolev type inequality; logarithmic Hardy-Sobolev inequality; fractional Nash type inequality; homogeneous groups; Heisenberg group.}
\maketitle
\begin{abstract}
    In this paper, we prove the fractional Hardy inequality on polarisable metric measure spaces. The integral Hardy inequality for $1<p\leq q<\infty$ is playing a key role in the proof. Moreover, we  also prove the fractional Hardy-Sobolev type inequality on metric measure spaces.   In addition, logarithmic Hardy-Sobolev and fractional Nash type inequalities on metric measure spaces are presented. In addition, we present applications on homogeneous groups and on the Heisenberg group. 
\end{abstract}

\section{Introduction}

In one of the famous works of G. H. Hardy (see \cite{Hardy}),  Hardy showed the following inequality:
\begin{equation}\label{one dim Hardy ineq}
    \int_a^{\infty}\frac{1}{x^p}\left(\int_{a}^xf(t)\,dt\right)^p\,dx\leq\left(\frac{p}{p-1}\right)^p\int_a^{\infty}f^p(x)\,dx,
\end{equation}
where $f\geq0$, $p>1$ and $a>0$. This inequality is called the direct integral Hardy inequality. This result has been extensively studied in various works, including \cite{Davies, Edmunds, Gogatishvili, Kufner, KufnerSamko, Opic}, and generalizations to higher dimensions have been established in \cite{Drabek}. The extension of the direct Hardy inequality to metric measure spaces for $1<p\leq q<\infty$ and $0<q<p$ with  $1<p<\infty$ were established in \cite{Verma} and \cite{Verma1}, respectively. Moreover, the reverse integral Hardy inequality for $q<0$ with $p\in (0,1)$ and $q\leq p<0$ were obtained in \cite{KRS19} and \cite{KRS21}.

In \cite{Dyda} and \cite{Frank}, the fractional Hardy inequality on $\mathbb{R}^n$ was studied, yielding the following inequality:
\begin{equation}\label{frac Hardy ineq on Rn}
    \int_{\mathbb{R}^n}\frac{|u(x)|^p}{|x|^{sp}_E}\, dx\leq C \int_{\mathbb{R}^n}\int_{\mathbb{R}^n}\frac{|u(x)-u(y)|^p}{|x-y|_E^{n+sp}}\, dxdy,
\end{equation}
where $s\in(0,1)$, $p>1$ is such that $n>sp$, and $|\cdot|_E$ denotes the Euclidean distance. Also, \eqref{frac Hardy ineq on Rn} was obtained for the case $n<sp$ in \cite{HKP97}. Also, by using the measure of the Euclidean ball $|B(x,r)|=|\sigma_{n-1}|r^{n}$, where $|\sigma_{n-1}|$ is the volume of the $n-1$ dimensional sphere, we can rewrite \eqref{frac Hardy ineq on Rn} as 
\begin{equation*}
     \int_{\mathbb{R}^n}\frac{|u(x)|^p}{|x|^{sp}_E}\, dx\leq C \int_{\mathbb{R}^n}\int_{\mathbb{R}^n}\frac{|u(x)-u(y)|^p}{|x-y|_E^{sp}|B(0,|x-y|_{E})|} dxdy.
\end{equation*}
Let  $(\mathbb{X},d)$ be a metric measure space. The measure $|\cdot|$ is doubling if there is a constant $C>0$, such that
\begin{equation*}
    |B(x, 2r)|\leq C|B(x, r)|, \quad x\in \mathbb{X}, \,\, r>0,
\end{equation*}
and $|\cdot|$ is reverse doubling, if there are constants $0<k<1$ and $0<C<1$ such that 
\begin{equation*}
    |B(x, kr)|\leq C|B(x, r)|, \quad x\in \mathbb{X}, \,\, 0<r<\text{diam}(\mathbb{X})/2,
\end{equation*}
where $\text{diam}(\mathbb{X})$ is the  diameter of $\mathbb{X}$.

For $E\subset\mathbb{X}$ and $r>0$, the open $r$-neighborhood of $E$ is the following set
\begin{equation*}
    E_r:=\{x\in\mathbb{X}: d(x, E)<r\}. 
\end{equation*}
The upper Assouad codimension of $E$ is denoted by $\overline{\text{co\,}\dim_{A}}(E)$, which is the infimum of all $D\geq 0$ for which there is a constant $C>0$ such that
\begin{equation*}
    \frac{|E_r\cap B(x, R)|}{|B(x, R)|}\geq C\left(\frac{r}{R}\right)^D
\end{equation*}
for all $x\in E$ and $0<r<R<\text{diam}(E)$. 
Hence,  in \cite{DLV22}, the authors obtained the localised fractional Hardy inequality on metric measure spaces with  doubling measures in the following form:
\begin{theorem}[Theorem 5.2, \cite{DLV22}]
 Let $0<s<1$ and $p>1$. Assume that the metric measure space $(\X, d)$ is complete and that $|\cdot|$ is reverse doubling. Let $E\subset \X$ be a closed set with $\overline{\rm{co\,}\dim_{A}}(E)<sp$ and let $u:\X\rightarrow \mathbb{R}$ be a continuous function such that $u=0$ on $E$. Then there exists $C>0$, independent of $u$, such that
 \begin{equation*}
     \int_{B\setminus E}\frac{|u(x)|^{p}}{d^{sp}(x,E)}dx\leq \int_{3B}\int_{3B}\frac{|u(x)-u(y)|^p}{d^{sp}(x,y)|B(x,d(x,y))|} dxdy,
 \end{equation*}
 whenever $B = B(w, r)$ with $w \in  E$ and $0 < r < \rm{diam}(E)$.
\end{theorem}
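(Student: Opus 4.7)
My plan is to prove the inequality by first establishing a pointwise bound via a Whitney-type chain argument, then integrating and applying Fubini together with the upper Assouad codimension estimate. Fix $x\in B(w,r)\setminus E$, set $\delta(x)=d(x,E)$, and choose $x^\ast\in E$ with $d(x,x^\ast)\leq 2\delta(x)$; since $w\in E$, we have $\delta(x)\leq r$, so $x^\ast$ and all intermediate points stay in a bounded multiple of $B$. Construct a chain of balls $B_k = B(y_k,r_k)$ with $r_k\asymp 2^{-k}\delta(x)$ connecting $B_0\ni x$ to $x^\ast$, arranged so that consecutive $B_k,B_{k+1}$ lie in a single enlarged ball $\widetilde B_k\subset 3B$ comparable to $B_k$. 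Continuity of $u$ together with $u(x^\ast)=0$ forces the averages $u_{B_k}\to 0$, which justifies the telescoping identity
$$u(x)=(u(x)-u_{B_0})+\sum_{k=0}^{\infty}(u_{B_k}-u_{B_{k+1}}).$$

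Next, Jensen's inequality gives $|u(x)-u_{B_0}|^p\leq |B_0|^{-1}\int_{B_0}|u(x)-u(y)|^p\,dy$, and for the telescoping differences
$$|u_{B_k}-u_{B_{k+1}}|^p\lesssim \frac{1}{|\widetilde B_k|^2}\iint_{\widetilde B_k\times\widetilde B_k}|u(y)-u(z)|^p\,dy\,dz.$$
Multiplying and dividing by $r_k^{sp}$ and invoking reverse doubling to replace $|\widetilde B_k|$ by $|B(y,d(y,z))|$ for the dominant pairs $y,z\in\widetilde B_k$ with $d(y,z)\asymp r_k$, the geometric decay $r_k^{sp}\asymp 2^{-ksp}\delta(x)^{sp}$ yields the pointwise bound
$$\frac{|u(x)|^p}{\delta(x)^{sp}}\lesssim \sum_{k\geq 0}2^{-k\varepsilon}\iint_{\widetilde B_k\times\widetilde B_k}\frac{|u(y)-u(z)|^p}{d(y,z)^{sp}\,|B(y,d(y,z))|}\,dy\,dz$$
for some $\varepsilon>0$ coming from the slack in summing the Hölder/Jensen estimates.

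Finally I integrate this over $B\setminus E$ and exchange the order of integration via Fubini. A given pair $(y,z)\in 3B\times 3B$ contributes to the double sum only when both points lie in some $\widetilde B_k(x)$, which forces $x$ into the neighbourhood $E_{Cd(y,z)}\cap 3B$ at that scale. The hypothesis $\overline{\textrm{co\,}\dim_A}(E)<sp$ provides a density estimate of the form $|E_\rho\cap B(y,R)|\lesssim (\rho/R)^D|B(y,R)|$ with $D<sp$, so summing over scales $\rho\asymp 2^{-k}R$ converges as a geometric series precisely because of the strict inequality $D<sp$. The main obstacle is the bookkeeping of the chain so that $\widetilde B_k\subset 3B$ for every $k$ while matching the right-hand side weight $d(y,z)^{sp}|B(y,d(y,z))|$ exactly; a secondary subtlety is using reverse doubling correctly to convert averages on $\widetilde B_k$ to the weight $|B(y,d(y,z))|$ when $d(y,z)$ may be substantially smaller than $r_k$, which requires splitting the inner double integral by the dyadic scale of $d(y,z)$ and summing the resulting series in both indices.
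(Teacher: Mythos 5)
First, a contextual remark: the paper you are reading does not prove this statement at all — it is quoted verbatim as Theorem 5.2 of \cite{DLV22} — so your proposal can only be measured against the argument in that reference, which does proceed by a chaining argument of the general shape you describe. The architecture of your sketch (telescoping along shrinking balls towards a nearest point of $E$, Jensen on each link, Fubini at the end) is therefore reasonable. However, there is a genuine error at the one place where the hypothesis $\overline{\mathrm{co\,}\dim_{A}}(E)<sp$ must enter. By the definition given in the paper, this hypothesis supplies a \emph{lower} bound $|E_\rho\cap B(x,R)|\geq C(\rho/R)^{D}|B(x,R)|$ for some $D<sp$: neighbourhoods of $E$ occupy a quantified, definite fraction of each ball. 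You invoke instead an \emph{upper} bound $|E_\rho\cap B(y,R)|\lesssim(\rho/R)^{D}|B(y,R)|$, which is not implied by the hypothesis (take $E=\X$, where the left-hand side equals $|B(y,R)|$); and even if one granted it, the series you would then be summing over scales $\rho\asymp 2^{-k}R$ with weights $\rho^{-sp}$ behaves like $\sum_k 2^{k(sp-D)}$, which diverges precisely when $D<sp$, so your convergence claim is inconsistent with the estimate you wrote. The correct mechanism is the opposite one: the lower bound lets you replace the average of $u$ over a chain ball $Q_k$ by its average over $E_\rho\cap Q_k$ at a cost of $(R/\rho)^{D}$, the latter average is controlled because $u$ vanishes on $E$, and the weighted telescoping sum $\sum_k 2^{-k(sp-D)}$ converges exactly because $D<sp$. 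As written, your argument uses $u=0$ on $E$ only through continuity at the single point $x^{\ast}$, whereas the theorem's hypothesis is really about the \emph{largeness} of $E$ and must be exploited as such.

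Two further points would need repair even after fixing the above. Your displayed pointwise bound omits the normalising factor $|\widetilde B_k(x)|^{-1}$ in front of each double integral; without it, integrating over $x\in B\setminus E$ and applying Fubini produces an extra factor comparable to $|B|$ rather than a constant, and with it the interchange requires bounding $\sum_k 2^{-k\varepsilon}\int |\widetilde B_k(x)|^{-1}\mathbf{1}\{y,z\in\widetilde B_k(x)\}\,dx$ uniformly in $(y,z)$ — exactly the bookkeeping you acknowledge as the main obstacle but do not carry out. Finally, the comparison $|B(y,d(y,z))|\lesssim|\widetilde B_k|$ for $y,z\in\widetilde B_k$ rests on the \emph{doubling} property $|B(y,2r)|\leq C|B(y,r)|$, not on reverse doubling as you state; reverse doubling gives an inequality in the opposite direction and cannot play that role.
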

On stratified group, Ciatti, Cowling and Ricci proved the fractional Hardy inequality with fractional powers of the sub-Laplacian by using the boundedness of $T_{s}:u\mapsto |\cdot|^{-s}\mathcal{L}^{-\frac{s}{2}}u$ on the Lebesgue space,  where $|\cdot|$ is a homogeneous norm, $0<s<\frac{Q}{p}$ ($Q$ homogeneous dimension and $p>1$) and $\mathcal{L}$ is the sub-Laplacian. On the Heisenberg group, Thangavelu and Roncal (in \cite{Roncal})  obtained the fractional Hardy inequality in the case $p=2$ by using some explicit integral representations for the fractional powers of the conformal invariant fractional sub-Laplacian in the form $\mathcal{L}_{s}=|2T|^s\frac{\Gamma\left(\frac{\mathcal{L}}{2|T|}+\frac{1+s}{2}\right)}{\Gamma\left(\frac{\mathcal{L}}{2|T|}+\frac{1-s}{2}\right)}$. Furthermore, the authors derived this representation via the heat semigroup  generated by $\mathcal{L}_{s}$. Generally, for the fractional sub-Laplacian $\mathcal{L}^{s}$, the fractional Hardy inequality was obtained by establishing a relation between $\mathcal{L}_{s}$ and $\mathcal{L}^{s}$ using the expressions $U_{s}=\mathcal{L}_{s}\mathcal{L}^{-s}$ and $V_{s}=\mathcal{L}^{-1}_{1-s}\mathcal{L}\mathcal{L}^{-s}$. These operators are bounded on $L^{2}(\mathbb{H}^{n})$ (see \cite[Subsection 5.3]{Roncal}). In \cite{Adimurthi}, by using this integral representation, the authors  generalised a version of fractional Hardy inequality in the case $Q>sp$. Further investigations have considered fractional Hardy inequalities in different settings, such as homogeneous Lie groups. In addition, in the case of $Q>sp$ a fractional version of the Hardy inequality was obtained in \cite{Kassymov1}. The main aim of this paper is to obtain the fractional Hardy inequality on polarisable metric measure spaces. 

In \cite{Ghoussoub}, the integer order Hardy-Sobolev inequality was established, given by
\begin{equation*}
    \left(\int_{\mathbb{R}^n}\frac{|u(x)|^{p^*_{\beta}}}{|x|_{E}^{\beta}}\,dx\right)^{\frac{p}{p^*_{\beta}}}\leq C\int_{\mathbb{R}^n}|\nabla u(x)|^p\,dx,
\end{equation*}
where, $p^*_{\beta}=\frac{(n-\beta)p}{n-p}$, $1<p<n$, $\beta\in(0, p)$, and $\nabla$ is the Euclidean gradient. The Sobolev inequality is obtained when $\beta=0$, and the classical Hardy inequality arises when $\beta=p$. The fractional version of the Hardy-Sobolev inequality was introduced in \cite{Yang} as follows:
\begin{equation*}
    \left(\int_{\mathbb{R}^n}\frac{|u(x)|^{p^*_{s,\beta}}}{|x|_{E}^{\beta}}\,dx\right)^{\frac{p}{p^*_{s,\beta}}}\leq C\int_{\mathbb{R}^n}\int_{\mathbb{R}^n}\frac{|u(x)-u(y)|^p}{|x-y|^{n+sp}}\,dxdy,
\end{equation*}
where, $p^*_{s,\beta}=\frac{(n-\beta)p}{n-sp}$, $0<\beta<sp<n$, and $s\in(0, 1)$. Similarly, we get the fractional Sobolev inequality if $\beta=0$ and the fractional Hardy inequality if $\beta=p$. We refer to \cite{Avkhadiev, Benguria, Dyda2, Frank2} for this topic.
 
In this paper, we extend the fractional Hardy and fractional Hardy-Sobolev type inequalities to metric measure spaces. We consider a metric space $\mathbb{X}$ equipped with a Borel measure $dx$ allowing for the polar decomposition at $a\in \mathbb{X}$. This decomposition involves the existence of a locally integrable function $\lambda\in L^1_{loc}$, such that for any $f\in L^1(\mathbb{X})$ we have
\begin{equation}\label{polar decomposition}
    \int_{\mathbb{X}}f(x)\,dx=\int_{0}^{\infty}\int_{\Sigma_r}f(r, \omega)\lambda(r, \omega)\,d\omega_r dr,
\end{equation}
for the set $\Sigma_r=\{x\in\mathbb{X}: d(x,a)=r\}\subset \mathbb{X}$ with a measure on it denoted by $d\omega = d\omega_r$.

The condition \eqref{polar decomposition} can be obtained as the standard polar decomposition formula, if a differentiable structure exists on $\mathbb{X}$. In general, when $\mathbb{X}$ does not have a differentiable structure, we can not obtain the function $\lambda(r, \omega)$ as the Jacobian of a polar change of coordinates.  Let us consider some examples of $\mathbb{X}$ for which the condition \eqref{polar decomposition} holds, and have the different expressions for $\lambda(r, \omega)$:

\begin{enumerate}[label=(\Roman*)]
    \item Euclidean space $\mathbb{R}^n: \lambda (r,\omega)=r^{n-1}$.
    \item Homogeneous groups $\mathbb{G}$: $\lambda(r, \omega) = r^{Q-1}$, where $Q$ represents the homogeneous dimension of the
group. Homogeneous groups have been extensively studied by Folland and Stein \cite{Folland}, see also \cite{Fischer} and \cite{RuzhanskySuragan}.
\item Hyperbolic spaces $\mathbb{H}^n:\lambda(r,\omega)=(\sinh r)^{n-1}$.
\item Cartan–Hadamard manifolds: Here, we consider a Riemannian manifold $(M, g)$ with sectional curvature denoted as $K_M$. A Riemannian manifold is called a Cartan–Hadamard manifold, if it possesses the following characteristics: it is complete, simply connected, and has non-positive sectional curvature, meaning that the sectional curvature $K_M \leq 0$ for each plane section at every point of $M$. To proceed, we fix a point $a \in M$ and denote by $\rho(x) = d(x, a)$
the geodesic distance from $x$ to $a$ on $M$. The exponential map $\exp _a : T_aM \rightarrow M$ is a diffeomorphism, see e.g. Helgason \cite{Helgason}. Let $J(\rho, \omega)$ be the density function on $M$, see e.g. \cite{Gallot}. Consequently, the polar decomposition
takes the following form:
\begin{equation*}
    \int_{M}f(x)dx=\int_{0}^{\infty}\int_{\mathbb{S}^{n-1}}f(\exp _a(\rho\omega))J(\rho, \omega)\rho^{n-1}\, d\rho d\omega,
\end{equation*}
so that we have \eqref{polar decomposition} with $\lambda(\rho, \omega) = J(\rho, \omega)\rho^{n-1}$.
\end{enumerate}

Conditions for \eqref{polar decomposition} to hold on general metric measure spaces were recently analysed in \cite{Avetisyan}.

\section{Main results}

Consider $\mathbb{X}$ as a metric measure space. For brevity, we denote a ball in $\mathbb{X}$ with center $a$ and radius $r$ as $B(a, r)$, i.e.,
\begin{equation*}
    B(a, r):=\left\{x\in\mathbb{X}: d(x,a)<r\right\},
\end{equation*}
where $d$ is the metric on $\mathbb{X}$. Let $a$ be a fixed point in $\mathbb{X}$, and we will use the notation $|x|_a = d(a, x)$.

Furthermore, let us define $|B(a, r)|$ as the measure of the ball $B(a, r)$.

Now, turning our attention to this setting, we recall the integral Hardy inequality on metric measure spaces.

\begin{theorem}[\cite{Verma} Integral Hardy inequality]\label{2.1}
    Let $1<p\leq q<\infty$. And let $\mathbb{X}$ be a metric measure space with a polar decomposition at $a$. Let $g, h$ be positive measurable functions on $\mathbb{X}$ such that $g\in L^1(\mathbb{X}\backslash \{0\})$ and $h^{1-p'}\in L^1_{loc}(\mathbb{X})$. Then the following inequality 
    \begin{equation}\label{integralHardyinequalityonmetricmeasurespaces}
        \left(\int_{\mathbb{X}}\left(\int_{B(a, |x|_a)}\left|f(y)\right|\,dy\right)^q g(x)\, dx\right)^{\frac{1}{q}}\leq C_H \left(\int_{\mathbb{X}}\left|f(x)\right|^p h(x)\, dx\right)^{\frac{1}{p}}
    \end{equation}
    holds for all measurable functions $f: \mathbb{X}\rightarrow \mathbb{C}$ if and only if the following condition holds:
    \begin{equation}\label{Condition of the int Hardy ineq}
        D_1:=\sup_{x\neq 0}\left[\int_{\mathbb{X}\backslash B(a, |x|_a)}g(y)\, dy\right]^{\frac{1}{q}}\left[\int_{B(a, |x|_a)}h^{1-p'}(y)\, dy\right]^{\frac{1}{p'}}<\infty.
    \end{equation}
    Moreover,
    \begin{equation*}
        D_1\leq C_H\leq (p')^{\frac{1}{p'}}p^{\frac{1}{q}}D_1, 
    \end{equation*}
    where $\frac{1}{p}+\frac{1}{p'}=1$.
\end{theorem}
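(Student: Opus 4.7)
The plan is to reduce the theorem, via the polar decomposition formula \eqref{polar decomposition}, to the classical one-dimensional weighted Hardy inequality of Muckenhoupt--Bradley on the half-line $(0,\infty)$. I would first introduce the radial quantities
\[ F(r) := \int_{\Sigma_r} |f(r,\omega)|\lambda(r,\omega)\,d\omega_r, \qquad G(\rho) := \int_{\Sigma_\rho} g(\rho,\omega)\lambda(\rho,\omega)\,d\omega_\rho, \]
\[ H(r)^{p'} := \int_{\Sigma_r} h^{1-p'}(r,\omega)\lambda(r,\omega)\,d\omega_r, \qquad \Phi(r)^p := \int_{\Sigma_r} |f|^p h \lambda \,d\omega_r. \]
Using \eqref{polar decomposition}, the left-hand side of \eqref{integralHardyinequalityonmetricmeasurespaces} equals $\left(\int_0^\infty(\int_0^\rho F(r)\,dr)^q G(\rho)\,d\rho\right)^{1/q}$, while the right-hand side equals $\left(\int_0^\infty \Phi(r)^p\,dr\right)^{1/p}$.

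For sufficiency, I would apply H\"older's inequality on $\Sigma_r$ via the factorisation $|f|\lambda = (|f|^p h \lambda)^{1/p}(h^{1-p'}\lambda)^{1/p'}$ (matching exponents forces the second $h$-exponent to be $-p'/p = 1-p'$) to obtain the pointwise bound $F(r) \leq \Phi(r)H(r)$. The problem then reduces to the one-dimensional weighted Hardy inequality
\[ \left(\int_0^\infty \Big(\int_0^\rho \Phi(r) H(r)\,dr\Big)^q G(\rho)\,d\rho\right)^{1/q} \leq C \left(\int_0^\infty \Phi(r)^p\,dr\right)^{1/p}, \]
which is the Muckenhoupt--Bradley setup with weights $u(\rho) = G(\rho)$ and $v(r) = H(r)^{-p}$. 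A direct computation gives $v^{1-p'} = H^{p'}$ and, unfolding via \eqref{polar decomposition},
\[ \int_0^t H(r)^{p'}\,dr = \int_{B(a,t)} h^{1-p'}\,dy, \qquad \int_t^\infty G(\rho)\,d\rho = \int_{\X\setminus B(a,t)} g(y)\,dy, \]
so the one-dimensional Muckenhoupt characteristic coincides with $D_1$. The classical sharp-constant Muckenhoupt--Bradley theorem on $(0,\infty)$ then produces both \eqref{integralHardyinequalityonmetricmeasurespaces} and the bound $C_H \leq (p')^{1/p'} p^{1/q} D_1$.

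For necessity, I would test \eqref{integralHardyinequalityonmetricmeasurespaces} with $f_t(y) := h^{1-p'}(y)\chi_{B(a,t)}(y)$ for each $t>0$. Restricting the outer integral to $\{|x|_a > t\}$ makes the inner integral the constant $I_t := \int_{B(a,t)} h^{1-p'}$, while the right-hand side, using $p(1-p')+1 = 1-p'$, reduces to $I_t^{1/p}$. The resulting inequality $I_t\left(\int_{\X\setminus B(a,t)} g\right)^{1/q} \leq C_H I_t^{1/p}$, after dividing by $I_t^{1/p}$ (noting $1-1/p = 1/p'$) and taking a supremum in $t$, yields $D_1 \leq C_H$.

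The main obstacle I anticipate is invoking the one-dimensional Muckenhoupt--Bradley theorem with the claimed sharp constant; one either cites a standard reference (such as Opic--Kufner or Bradley's original paper) or reproduces its proof via a dyadic decomposition of $(0,\infty)$. A minor technicality is that when $h^{1-p'}$ is only locally integrable, the test functions above must first be truncated to annular sets and then passed to the limit by monotone convergence, which is justified by the hypothesis $h^{1-p'}\in L^1_{\mathrm{loc}}(\X)$.
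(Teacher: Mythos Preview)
The paper does not prove this theorem; it is quoted from \cite{Verma} as a known tool, so there is no in-paper proof to compare against. Your proposal is correct and is precisely the strategy used in \cite{Verma}: use the polar decomposition \eqref{polar decomposition} to rewrite both sides as one-dimensional integrals, apply H\"older on each sphere $\Sigma_r$ to dominate $F(r)$ by $\Phi(r)H(r)$, and then invoke the classical Muckenhoupt--Bradley characterisation on $(0,\infty)$; the identification of the one-dimensional Muckenhoupt constant with $D_1$ and the test-function argument for necessity are exactly as you describe.

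One small point worth tightening: in your sufficiency step you bound the left-hand side of \eqref{integralHardyinequalityonmetricmeasurespaces} by the one-dimensional Hardy expression applied to $\Phi H$, which yields $C_H \le C_{1D} \le (p')^{1/p'}p^{1/q}D_1$ as claimed. However, this chain of inequalities does not by itself show that the \emph{best} constant $C_H$ for the $\mathbb{X}$-inequality equals the best constant for the reduced one-dimensional problem, only that it is no larger; the lower bound $D_1 \le C_H$ must come from the direct test-function argument on $\mathbb{X}$ (which you do), not from the one-dimensional reduction. You have this right, but it is worth stating explicitly that the H\"older step can lose information and that the two bounds are obtained by genuinely different arguments.
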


\subsection{Fractional Hardy inequality}
As the main result of this section, we present the fractional Hardy inequality on metric measure spaces with polar decomposition.
\begin{theorem}\label{frac Hardy ineq}
Let $\mathbb{X}$ be a metric measure space with polar decomposition at $a$, and let $s>0$ and $p>1$. Assume that $v(x,y)$ is a positive measurable function on $\mathbb{X}\times\mathbb{X}$ and denote 
\begin{equation}\label{1}
    A(x):=\left(\frac{1}{\left|B(a, |x|_a)\right|}\int_{B(a, |x|_a)}v^{1-p'}(x,y)dy\right)^{1-p}.
\end{equation}
Suppose that $\left(p'\right)^{\frac{1}{p'}}p^{\frac{1}{p}}D_1<1$, where 
\begin{equation}\label{2}
    D_1=\sup_{x\neq a}\left[\left(\int_{\mathbb{X}\backslash B(a, |x|_a)}\frac{A(y)}{|y|_a^{sp}|B(a,|y|_a)|^p}\, dy\right)^{\frac{1}{p}}\left(\int_{B(a, |x|_a)}\left(\frac{A(y)}{|y|_a^{sp}}\right)^{1-p'}\, dy\right)^{\frac{1}{p'}}\right]<\infty.
\end{equation}
Then the following inequality holds
\begin{equation}\label{Hardy}
    \int_{\mathbb{X}}\frac{A(x)\left|u(x)\right|^p}{|x|_a^{sp}}\, dx\leq \frac{2^{sp}}{(1-\left(p'\right)^{\frac{1}{p'}}p^{\frac{1}{p}}D_1)^p }\int_{\mathbb{X}}\int_{\mathbb{X}}\frac{\left|u(x)-u(y)\right|^p v(x,y)}{d^{sp}(x,y)\left|B\left(a, \frac{d(x,y)}{2}\right)\right|}dx\, dy.
\end{equation}
\end{theorem}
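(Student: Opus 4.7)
The strategy is a standard Hardy inequality route combining (i) a decomposition of $u(x)$ into an average plus an oscillation, (ii) Theorem~\ref{2.1} to absorb the average part, and (iii) Hölder's inequality on the oscillation part, with weights chosen so that the auxiliary function $A(x)$ defined in \eqref{1} cancels perfectly.

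Concretely, let $\bar u(x):=\frac{1}{|B(a,|x|_a)|}\int_{B(a,|x|_a)} u(y)\,dy$ and set $I:=\int_{\mathbb{X}}\frac{A(x)|u(x)|^p}{|x|_a^{sp}}\,dx$. Writing $u(x)=(u(x)-\bar u(x))+\bar u(x)$ and applying Minkowski's inequality in $L^p\bigl(A(x)|x|_a^{-sp}\,dx\bigr)$ yields
\begin{equation*}
I^{1/p}\leq\left(\int_{\mathbb{X}}\frac{A(x)|u(x)-\bar u(x)|^p}{|x|_a^{sp}}\,dx\right)^{1/p}+\left(\int_{\mathbb{X}}\frac{A(x)|\bar u(x)|^p}{|x|_a^{sp}}\,dx\right)^{1/p}.
\end{equation*}
For the second term I would use $|\bar u(x)|^p\leq |B(a,|x|_a)|^{-p}\bigl(\int_{B(a,|x|_a)}|u(y)|\,dy\bigr)^p$ and invoke Theorem~\ref{2.1} with $q=p$, $f=|u|$, $g(x)=\frac{A(x)}{|x|_a^{sp}|B(a,|x|_a)|^p}$ and $h(x)=\frac{A(x)}{|x|_a^{sp}}$. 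The resulting Muckenhoupt-type quantity is exactly the $D_1$ of \eqref{2}, so the constant $C_H\leq (p')^{1/p'}p^{1/p}D_1$ is strictly less than $1$ by hypothesis, and we get
\begin{equation*}
\left(\int_{\mathbb{X}}\frac{A(x)|\bar u(x)|^p}{|x|_a^{sp}}\,dx\right)^{1/p}\leq (p')^{1/p'}p^{1/p}D_1\cdot I^{1/p}.
\end{equation*}
Moving this to the left-hand side gives $\bigl(1-(p')^{1/p'}p^{1/p}D_1\bigr)I^{1/p}\leq \bigl(\int_{\mathbb{X}}A(x)|u(x)-\bar u(x)|^p|x|_a^{-sp}\,dx\bigr)^{1/p}$.

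To handle the oscillation term, I write $u(x)-\bar u(x)=|B(a,|x|_a)|^{-1}\int_{B(a,|x|_a)}(u(x)-u(y))\,dy$ and apply Hölder with the splitting $(u(x)-u(y))v^{1/p}(x,y)\cdot v^{-1/p}(x,y)$. This produces
\begin{equation*}
|u(x)-\bar u(x)|^p\leq \frac{1}{|B(a,|x|_a)|^p}\left(\int_{B(a,|x|_a)}|u(x)-u(y)|^p v(x,y)\,dy\right)\left(\int_{B(a,|x|_a)}v^{1-p'}(x,y)\,dy\right)^{p-1}.
\end{equation*}
The key point, and the reason for the particular definition \eqref{1}, is that $A(x)\cdot\bigl(\int_{B(a,|x|_a)}v^{1-p'}(x,y)\,dy\bigr)^{p-1}=|B(a,|x|_a)|^{p-1}$, so the $A(x)$ factor collapses and
\begin{equation*}
\int_{\mathbb{X}}\frac{A(x)|u(x)-\bar u(x)|^p}{|x|_a^{sp}}\,dx\leq \int_{\mathbb{X}}\int_{B(a,|x|_a)}\frac{|u(x)-u(y)|^p v(x,y)}{|x|_a^{sp}|B(a,|x|_a)|}\,dy\,dx.
\end{equation*}

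For the final step, I observe that whenever $y\in B(a,|x|_a)$ the triangle inequality gives $d(x,y)\leq |x|_a+|y|_a<2|x|_a$, so $|x|_a^{sp}>2^{-sp}d(x,y)^{sp}$; and since $B(a,d(x,y)/2)\subseteq B(a,|x|_a)$, also $|B(a,|x|_a)|\geq |B(a,d(x,y)/2)|$. Substituting these two estimates and extending the inner integration from $B(a,|x|_a)$ to all of $\mathbb{X}$ yields the factor $2^{sp}$ and the target double integral. Raising to the $p$-th power and dividing by $(1-(p')^{1/p'}p^{1/p}D_1)^p$ delivers \eqref{Hardy}. The main obstacle is really conceptual rather than computational: one has to notice that the definition of $A$ is engineered precisely so that the Hölder estimate on the oscillation and the weight in the integral Hardy inequality align to give the same power of $A$, making the whole scheme self-consistent.
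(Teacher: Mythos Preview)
Your proof is correct and follows essentially the same route as the paper: the Minkowski split into oscillation plus average, the application of Theorem~\ref{2.1} with $g(x)=A(x)|x|_a^{-sp}|B(a,|x|_a)|^{-p}$ and $h(x)=A(x)|x|_a^{-sp}$ to control the average by $(p')^{1/p'}p^{1/p}D_1\,I^{1/p}$, and the H\"older step with the $v^{1/p}\cdot v^{-1/p}$ splitting so that $A(x)$ cancels against $\bigl(\int v^{1-p'}\bigr)^{p-1}$, followed by the triangle-inequality estimates $d(x,y)\le 2|x|_a$ and $|B(a,d(x,y)/2)|\le |B(a,|x|_a)|$. The only cosmetic difference is that you explicitly insert $|\bar u(x)|\le |B(a,|x|_a)|^{-1}\int_{B(a,|x|_a)}|u(y)|\,dy$ before invoking Theorem~\ref{2.1} with $f=|u|$, whereas the paper applies the theorem directly to the quantity $\bigl|\int_{B(a,|x|_a)}u(y)\,dy\bigr|$; both are equivalent since \eqref{integralHardyinequalityonmetricmeasurespaces} already has $|f(y)|$ inside.
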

\begin{proof}
By Minkowski inequality, we have
\begin{equation}\label{4}
\begin{split}
     \left(\int_{\mathbb{X}}\frac{A(x)\left|u(x)\right|^p}{|x|_a^{sp}}\, dx\right)^{\frac{1}{p}}&=\left(\int_{\mathbb{X}}\frac{A(x)}{|x|_a^{sp}}\left|u(x)-\frac{1}{\left|B(a, |x|_a)\right|}\int_{B(a, |x|_a)}u(y)\, dy\right. \right. \\
     &+\left.\frac{1}{\left|B(a, |x|_a)\right|}\left.\int_{B(a, |x|_a)} u(y)\, dy\right|^p\, dx\right)^{\frac{1}{p}}\\
     &\leq\left(\int_{\mathbb{X}}\frac{A(x)}{|x|_a^{sp}}\left|u(x)-\frac{1}{\left|B(a, |x|_a)\right|}\int_{B(a, |x|_a)}u(y)\,dy\right|^p dx\right)^{\frac{1}{p}}\\
     &+\left(\int_{\mathbb{X}}\frac{A(x)}{|x|_a^{sp}\left|B(a, |x|_a)\right|^p}\left|\int_{B(a, |x|_a)} u(y)\, dy\right|^p\, dx\right)^{\frac{1}{p}}=I_1+I_2,
\end{split}
\end{equation}
    where 
    \begin{equation*}
        I_1=\left(\int_{\mathbb{X}}\frac{1}{|x|_a^{sp}}\left|u(x)-\frac{1}{B(a, |x|_a)}\int_{B(a, |x|_a)}u(y)\,dy\right|^p dx\right)^{\frac{1}{p}}
    \end{equation*}
and 
\begin{equation*}
    I_2=\left(\int_{\mathbb{X}}\frac{A(x)}{|x|_a^{sp}\left|B(a, |x|_a)\right|^p}\left|\int_{B(a, |x|_a)} u(y)\, dy\right|^p\, dx\right)^{\frac{1}{p}}.
\end{equation*}
By H\"{o}lder inequality with $\frac{1}{p}+\frac{1}{p'}=1$, we have
\begin{equation}\label{7}
    \begin{split}
    &\left|\int_{B\left(a, |x|_a\right)}\left(u(x)-u(y)\right) v(x,y)^{\frac{1}{p}} v(x,y)^{-\frac{1}{p}}\, dy\right|^p\\
    &\leq\left(\int_{B\left(a, |x|_a\right)}\left|u(x)-u(y)\right|^p v(x,y)\, dy\right)\left(\int_{B\left(a, |x|_a\right)}v^{1-p'}(x,y)\, dy\right)^{p-1}.
    \end{split}
\end{equation}

For $0 < r_1 \leq r_2$, we have $\left|B(a, r_1)\right| \leq \left|B(a, r_2)\right|$.

Furthermore, using the property that $|y|_a \leq |x|_a$ for all $y \in B\left(a, |x|_a\right)$ together with the triangle inequality, we obtain
\begin{equation}\label{8}
d(x,y)\leq |x|_a+|y|_a\leq 2|x|_a,
\end{equation}
and hence
\begin{equation}\label{9}
\left|B\left(a, \frac{d(x,y)}{2}\right)\right|\leq\left|B(a,|x|_a)\right|.
\end{equation}
Therefore, we get
\begin{equation}\label{10}
    \begin{split}
    I_1&=\left(\int_{\mathbb{X}}\frac{A(x)}{|x|_a^{sp}}\left|u(x)-\frac{1}{\left|B(a, |x|_a)\right|}\int_{B(a, |x|_a)}u(y)\,dy\right|^p dx\right)^{\frac{1}{p}}\\
    &=\left(\int_{\mathbb{X}}\frac{A(x)}{|x|_a^{sp}\left|B(a, |x|_a)\right|^p}\left|u(x)\left|B(a, |x|_a)\right|-\int_{B(a, |x|_a)}u(y)\,dy\right|^p dx\right)^{\frac{1}{p}}\\
    &=\left(\int_{\mathbb{X}}\frac{A(x)}{|x|_a^{sp}\left|B(a, |x|_a)\right|^p}\left|\int_{B(a, |x|_a)}\left(u(x)-u(y)\right)\,dy\right|^p dx\right)^{\frac{1}{p}}\\
    &\stackrel{(\ref{7})}{\leq}\left(\int_{\mathbb{X}}\frac{A(x)\left(\int_{B\left(a, |x|_a\right)}v^{1-p'}(x,y)\, dy\right)^{p-1}}{|x|_a^{sp}\left|B(a, |x|_a)\right|^{p-1}\left|B(a, |x|_a)\right|} \int_{B\left(a, |x|_a\right)}\left|u(x)-u(y)\right|^p v(x,y)\, dy   dx\right)^{\frac{1}{p}}\\
    &\stackrel{(\ref{1})}{=} \left(\int_{\mathbb{X}}\frac{1}{|x|_a^{sp}\left|B(a, |x|_a)\right|} \int_{B\left(a, |x|_a\right)}\left|u(x)-u(y)\right|^p v(x,y)\, dy   dx\right)^{\frac{1}{p}}  \\
    &\stackrel{(\ref{8}), (\ref{9})}{\leq} 2^{s}\left(\int_{\mathbb{X}}\int_{B\left(a, |x|_a\right)}\frac{\left|u(x)-u(y)\right|^p v(x,y)}{d^{sp}(x,y)\left|B\left(a, \frac{d(x,y)}{2}\right)\right|}\, dydx\right)^{\frac{1}{p}}\\
    &\leq2^{s}\left(\int_{\mathbb{X}}\int_{\mathbb{X}}\frac{\left|u(x)-u(y)\right|^p v(x,y)}{d^{sp}(x,y)\left|B\left(a, \frac{d(x,y)}{2}\right)\right|}dxdy\right)^{\frac{1}{p}}.
     \end{split}
\end{equation}

    By assumption \eqref{2}, we can observe that the integral Hardy inequality (Theorem \ref{2.1}) applies to functions $g(x) = \frac{A(x)}{|x|_a^{sp}\left|B(a, |a|_a)\right|}$ and $h(x) = \frac{A(x)}{|x|_a^{sp}}$. This yields
    \begin{equation}\label{11}
        \begin{split}
        I_2&=\left(\int_{\mathbb{X}}\frac{A(x)}{|x|_a^{sp}\left|B(a, |x|_a)\right|^p}\left|\int_{B(a, |x|_a)} u(y)\, dy\right|^p\, dx\right)^{\frac{1}{p}}\leq C_H \left(\int_{\mathbb{X}}\frac{A(x)\left|u(x)\right|^p}{|x|_a^{sp}}\,dx\right)^{\frac{1}{p}}\\
        &\leq \left(p'\right)^{\frac{1}{p'}}p^{\frac{1}{p}}D_1 \left(\int_{\mathbb{X}}\frac{A(x)\left|u(x)\right|^p}{|x|_a^{sp}}\,dx\right)^{\frac{1}{p}}.
    \end{split}
    \end{equation}

    Substituting \eqref{10} and \eqref{11} into \eqref{4}, and using that $\left(p'\right)^{\frac{1}{p'}}p^{\frac{1}{p}}D_1<1$, we obtain
    \begin{equation*}
    \left(\int_{\mathbb{X}}\frac{A(x)\left|u(x)\right|^p}{|x|_a^{sp}}\,dx\right)^{\frac{1}{p}}\leq \frac{2^{s}}{1-\left(p'\right)^{\frac{1}{p'}}p^{\frac{1}{p}}D_1 }\left(\int_{\mathbb{X}}\int_{\mathbb{X}}\frac{\left|u(x)-u(y)\right|^p v(x,y)}{d^{sp}(x,y)\left|B\left(a, \frac{d(x,y)}{2}\right)\right|}dxdy\right)^{\frac{1}{p}},
    \end{equation*}
    which completes the proof.
\end{proof}

\subsection{Fractional Hardy-Sobolev type inequality} In this subsection, we show the fractional Hardy-Sobolev inequality on metric measure spaces with polar decomposition. 
\begin{theorem}\label{theo 3.2}
    Let $\mathbb{X}$ be a metric measure space with a polar decomposition at $a$, $s>0$, and $1<p,q<\infty$. Assume that $z(x)$ and $v(x)$ are positive measurable functions on $\mathbb{X}$ and denote 
    \begin{equation}\label{12}
        V(x)=\int_{B(a,|x|_a)} v(y)\, dy,
    \end{equation}
    \begin{equation}\label{13}
        A(x):=\frac{1}{\left|B(a, |x|_a)\right|^{\frac{q}{p}}}\left(\int_{B(a,|x|_a)}\left(\frac{v^p(y)}{z(y)}\right)^{\frac{1}{p-1}}\, dy\right)^{-\frac{q}{p'}}V^q (x) v(x).
    \end{equation}
     Suppose that $\left(q'\right)^{\frac{1}{q'}}q^{\frac{1}{q}}D_1<1$, where 
    \begin{equation}\label{14}
        D_1=\sup_{x\neq0}\left[\left(\int_{\mathbb{X}\backslash B(a, |x|_a)}\frac{A(y)}{|y|_a^{sq}V^q(y)}\, dy\right)^{\frac{1}{q}}\left(\int_{B(a, |x|_a)}\left(\frac{A(y)v^{-q} (y)}{|y|_a^{sq}}\right)^{(1-q')}\, dy\right)^{\frac{1}{q'}}\right]<\infty.
        \end{equation}
    Then we have
    \begin{equation}\label{frac Sovolev}
    \begin{split}
        &\int_{\mathbb{X}}\frac{\left|u(x)\right|^q A(x)}{|x|_a^{sq}}\, dx\\
        &\leq \frac{2^{sq}}{(1-\left(q'\right)^{\frac{1}{q'}}q^{\frac{1}{q}}D_1)^q }\int_{\mathbb{X}}v(y)\left(\int_{\mathbb{X}}\frac{\left|u(x)-u(y)\right|^p z(x)}{d^{sp}(x,y)\left|B\left(a, \frac{d(x,y)}{2}\right)\right|}dx\right)^{\frac{q}{p}} \, dy.
    \end{split}
    \end{equation}
\end{theorem}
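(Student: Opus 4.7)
My plan is to mirror the proof of Theorem \ref{frac Hardy ineq}, but replacing the unweighted mean over $B(a,|x|_a)$ with the $v$-weighted mean
\[ Mu(x) := \frac{1}{V(x)} \int_{B(a,|x|_a)} u(y)\, v(y)\, dy, \]
and to invoke the integral Hardy inequality (Theorem \ref{2.1}) in its diagonal case, i.e.\ with both of its exponents equal to $q$.

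Writing $u(x) = (u(x) - Mu(x)) + Mu(x)$ and applying Minkowski's inequality in the $L^q$-norm with weight $A(x)/|x|_a^{sq}$, I would obtain
\[ \Bigl(\int_{\X} \frac{A(x)|u(x)|^q}{|x|_a^{sq}}\, dx\Bigr)^{1/q} \leq I_1 + I_2, \]
where $I_1$ measures the deviation from the mean and $I_2$ measures the mean itself. For $I_2$, Theorem \ref{2.1} applied to $f(y) = u(y)v(y)$ with $g(x) = A(x)/(|x|_a^{sq} V^q(x))$ and $h(x) = A(x)\, v^{-q}(x)/|x|_a^{sq}$ gives, by hypothesis \eqref{14} (which is exactly \eqref{Condition of the int Hardy ineq} for this choice with $p=q$), the bound
\[ I_2 \leq (q')^{1/q'} q^{1/q} D_1 \Bigl(\int_{\X} \frac{A(x)|u(x)|^q}{|x|_a^{sq}}\, dx\Bigr)^{1/q}, \]
which can be absorbed on the left thanks to the smallness assumption $(q')^{1/q'} q^{1/q} D_1 < 1$.

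For $I_1$, observing that $u(x) - Mu(x) = \frac{1}{V(x)} \int_{B(a,|x|_a)} (u(x)-u(y))\, v(y)\, dy$, I would apply H\"older's inequality on the inner integral with conjugate exponents $p$ and $p'$ through the factorization $v(y) = z(y)^{1/p} \cdot v(y) z(y)^{-1/p}$. This produces the factor $\bigl(\int_{B(a,|x|_a)} (v^p/z)^{1/(p-1)}\, dy\bigr)^{q/p'}$ which, by design of $A(x)$ in \eqref{13}, cancels against $A(x)/V^q(x)$ to leave the clean prefactor $v(x)/|B(a,|x|_a)|^{q/p}$. Next, using the triangle inequality $d(x,y) \leq 2|x|_a$ and the monotonicity $|B(a,|x|_a)| \geq |B(a,d(x,y)/2)|$ for $y \in B(a,|x|_a)$, I would pull $|x|_a^{sp}|B(a,|x|_a)|$ inside the $q/p$-power and replace it by $d^{sp}(x,y)\,|B(a,d(x,y)/2)|$, gaining an overall factor $2^{sq}$. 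Enlarging the inner integration to all of $\X$ and then relabeling $x \leftrightarrow y$ via the symmetry of $d$ would then match precisely the right-hand side of \eqref{frac Sovolev}.

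The main technical obstacle I expect is the algebraic bookkeeping in the H\"older step: the factorization must be set up so that the $\bigl(\int (v^p/z)^{1/(p-1)}\bigr)^{q/p'}$ factor produced by H\"older exactly cancels the corresponding factor built into $A(x)$. The concluding $x \leftrightarrow y$ swap, justified only by symmetry of $d$, is what converts the $v(x)$ and $z(y)$ that naturally land in the computation into the $v(y)$ and $z(x)$ that appear in \eqref{frac Sovolev}. Once this is arranged, adding the estimates for $I_1$ and $I_2$, absorbing $I_2$ via the smallness hypothesis, and raising both sides to the $q$-th power completes the argument.
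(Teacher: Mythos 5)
Your proposal is correct and follows essentially the same route as the paper's proof: the $v$-weighted mean, the Minkowski split into $I_1+I_2$, the H\"older factorization engineered so that the $\left(\int (v^p/z)^{1/(p-1)}\right)^{q/p'}$ factor cancels against $A$, the triangle-inequality and ball-monotonicity step yielding the $2^{sq}$ constant, and the diagonal integral Hardy inequality with absorption for $I_2$. The only cosmetic difference is that the paper names the outer integration variable $y$ from the outset, so your concluding $x\leftrightarrow y$ relabeling (a harmless renaming of dummy variables) does not appear there.
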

    
    \begin{proof} We follow the same pattern as in the proof of Theorem \ref{frac Hardy ineq}.
        By Minkowski inequality, we get

        \begin{equation}\label{16}
            \begin{split}
                \left(\int_{\mathbb{X}}\frac{\left|u(y)\right|^q A(y)}{|y|_a^{sq}}\, dy\right)^{\frac{1}{q}}&=\left(\int_{\mathbb{X}}\frac{A(y)}{|y|_a^{sq}}\left|u(y)-\frac{1}{V(y)}\int_{B(a, |y|_a)}u(x)v(x)\, dx\right.\right.\\
                &+\left.\left.\frac{1}{V(y)}\int_{B(a, |y|_a)}u(x)v(x)\, dx\right|^q\, dy\right)^{\frac{1}{q}}\\
                &\leq \left(\int_{\mathbb{X}}\frac{ A(y)}{|y|_a^{sq}}\left|u(y)-\frac{1}{V(y)}\int_{B(a, |y|_a)}u(x)v(x)\, dx\right|^q\, dy\right)^{\frac{1}{q}}\\
                &+\left(\int_{\mathbb{X}}\frac{A(y)}{|y|_a^{sq}V^q(y)}\left|\int_{B(a, |y|_a)}u(x)v(x)\, dx\right|^q\, dy\right)^{\frac{1}{q}}=I_1+I_2,
            \end{split}
        \end{equation}
        where
\begin{equation*}
    I_1=\left(\int_{\mathbb{X}}\frac{ A(y)}{|y|_a^{sq}}\left|u(y)-\frac{1}{V(y)}\int_{B(a, |y|_a)}u(x)v(x)\, dx\right|^q\, dy\right)^{\frac{1}{q}},
\end{equation*}
       and
       \begin{equation*}
           I_2=\left(\int_{\mathbb{X}}\frac{A(y)}{|y|_a^{sq}V^q(y)}\left|\int_{B(a, |y|_a)}u(x)v(x)\, dx\right|^q\, dy\right)^{\frac{1}{q}}.
       \end{equation*}
By H\"{o}lder inequality with $\frac{1}{p}+\frac{1}{p'}=1$, we get
\begin{equation}\label{19}
    \begin{split}
        &\left|\int_{B(a, |y|_a)}\left(u(y)-u(x)\right)v(x)\, dx\right|^q\\
        &\leq \left(\int_{B(a, |y|_a)}\left|u(x)-u(y)\right|^p z(x)\, dx\right)^{\frac{q}{p}}\left(\int_{B(a, |y|_a)}\left(\frac{v^p(x)}{z(x)}\right)^{\frac{1}{p-1}}\, dx\right)^{\frac{q}{p'}}.
    \end{split}
\end{equation}

By using \eqref{19} together with \eqref{8} and \eqref{9}, we have
\begin{equation}\label{22}
    \begin{split}
        I_1&=\left(\int_{\mathbb{X}}\frac{ A(y)}{|y|_a^{sq}}\left|u(y)-\frac{1}{V(y)}\int_{B(a, |y|_a)}u(x)v(x)\, dx\right|^q\, dy\right)^{\frac{1}{q}}\\
        &\stackrel{(\ref{12})}{=}\left(\int_{\mathbb{X}}\frac{ A(y)}{|y|_a^{sq}V^q(y)}\left|\int_{B(a, |y|_a)}\left(u(y)-u(x)\right)v(x)\, dx\right|^q\, dy\right)^{\frac{1}{q}}\\
        &\stackrel{(\ref{19})}{\leq}\left(\int_{\mathbb{X}}\frac{ A(y) \left(\int_{B(a, |y|_a)}\left|u(x)-u(y)\right|^p z(x)\, dx\right)^{\frac{q}{p}}}{|y|_a^{sq}V^q(y)\left(\int_{B(a, |y|_a)}\left(\frac{v^p(x)}{z(x)}\right)^{\frac{1}{p-1}}\, dx\right)^{-\frac{q}{p'}}} \, dy\right)^{\frac{1}{q}}\\
        &\stackrel{(\ref{13})}{=}\left(\int_{\mathbb{X}}\frac{ \left(\int_{B(a, |y|_a)}\left|u(x)-u(y)\right|^p z(x)\, dx\right)^{\frac{q}{p}}v(y)}{|y|_a^{sq}\left|B(a, |y|_a)\right|^{\frac{q}{p}}}  \, dy\right)^{\frac{1}{q}}\\
        &\stackrel{(\ref{8}), (\ref{9})}{\leq}2^s\left(\int_{\mathbb{X}}\left(\int_{B(a, |y|_a)}\frac{\left|u(x)-u(y)\right|^p z(x)}{d^{sp}(x,y)\left|B\left(a, \frac{d(x,y)}{2}\right)\right|}\, dx\right)^{\frac{q}{p}}v(y)\, dy\right)^{\frac{1}{q}}\\
        &\leq 2^s\left(\int_{\mathbb{X}}\left(\int_{\mathbb{X}}\frac{\left|u(x)-u(y)\right|^p z(x)}{d^{sp}(x,y)\left|B\left(a, \frac{d(x,y)}{2}\right)\right|}\, dx\right)^{\frac{q}{p}}v(y)\, dy\right)^{\frac{1}{q}}.
    \end{split}
\end{equation}

    With regard to assumption \eqref{14}, we see that the integral Hardy inequality (see Theorem \ref{2.1}) holds for the functions $g(x):=\frac{A(x)}{|x|_a^{sq}V^q(x)}$ and $h(x):=\frac{A(x)}{|x|_a^{sq}v^q(x)}$, and hence we have

    \begin{equation}\label{23}
        \begin{split}
            I_2&=\left(\int_{\mathbb{X}}\frac{A(y)}{|y|_a^{sq}(y,a)V^q(y)}\left|\int_{B(a, |y|_a)}u(x)v(x)\, dx\right|^q\, dy\right)^{\frac{1}{q}}\\
            &\leq C_H  \left(\int_{\mathbb{X}}\frac{\left|u(x)\right|^q A(x)}{|x|_a^{sq}}\, dx\right)^{\frac{1}{q}}\\
            &\leq \left(q'\right)^{\frac{1}{q'}}q^{\frac{1}{q}}D_1 \left(\int_{\mathbb{X}}\frac{\left|u(x)\right|^q A(x)}{|x|_a^{sq}}\, dx\right)^{\frac{1}{q}}.
        \end{split}
    \end{equation}

    Finally, combining \eqref{22}, \eqref{23}, and considering that $\left(q'\right)^{\frac{1}{q'}}q^{\frac{1}{q}}D_1<1$, we derive
    \begin{equation}
    \begin{split}
       &\left(\int_{\mathbb{X}}\frac{\left|u(x)\right|^q A(x)}{|x|_a^{sq}}\, dx\right)^{\frac{1}{q}}\\
       &\leq \frac{2^{s}}{1-\left(q'\right)^{\frac{1}{q'}}q^{\frac{1}{q}}D_1 }\left(\int_{\mathbb{X}}\left(\int_{\mathbb{X}}\frac{\left|u(x)-u(y)\right|^p z(x)}{d^{sp}(x,y)\left|B\left(a, \frac{d(x,y)}{2}\right)\right|}\, dx\right)^{\frac{q}{p}} v(y)\, dy\right)^{\frac{1}{q}}, 
    \end{split}
    \end{equation}
    completing the proof. 
    \end{proof}

\subsection{Logarithmic Hardy-Sobolev type inequality}
Let us recall logarithmic H\"{o}lder inequality on general measure spaces. 

\begin{theorem}[Logarithmic H\"{o}lder inequality \cite{chatzakou2021logarithmic}]\label{theo2.1}
  Let $\mathbb{X}$ be a measure space. Let $u\in L^p\left(\mathbb{X}\right)\cap L^q\left(\mathbb{X}\right)\backslash \{0\}$ with some $1 < p < q < \infty$. Then we have 
  \begin{equation}\label{logHolder}
      \int_{\mathbb{X}}\frac{\left|u\right|^p}{\left\|u\right\|^p_{L^p\left(\mathbb{X}\right)}}\log\left(\frac{\left|u\right|^p}{\left\|u\right\|^p_{L^p\left(\mathbb{X}\right)}}\right)\, dx\leq\frac{q}{q-p}\log\left(\frac{\left\|u\right\|^p_{L^q\left(\mathbb{X}\right)}}{\left\|u\right\|^p_{L^p\left(\mathbb{X}\right)}}\right).
  \end{equation}
\end{theorem}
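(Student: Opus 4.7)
The plan is to reduce the statement to a single application of Jensen's inequality for the concave logarithm. After normalising $u$ so that $v:=|u|^{p}/\|u\|_{L^{p}(\mathbb{X})}^{p}$, the measure $d\mu := v\,dx$ is a probability measure on $\mathbb{X}$ (here $u\neq 0$ ensures $\|u\|_{L^{p}(\mathbb{X})}>0$ and $u\in L^{p}(\mathbb{X})$ gives $\mu(\mathbb{X})=1$), and the left-hand side of \eqref{logHolder} is precisely the entropy $\int_{\mathbb{X}}\log(v)\,d\mu$. This reformulation puts the inequality in exactly the shape that Jensen's inequality is designed to treat, so no machinery beyond a convexity argument should be needed.

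Next, I would introduce the auxiliary exponent $\alpha:=q/p>1$ and apply Jensen to the $\mu$-integrable function $v^{\alpha-1}$. Concavity of $\log$ gives
\begin{equation*}
\int_{\mathbb{X}}\log(v^{\alpha-1})\,d\mu \;\leq\; \log\!\left(\int_{\mathbb{X}}v^{\alpha-1}\,d\mu\right) \;=\; \log\!\left(\int_{\mathbb{X}}v^{\alpha}\,dx\right),
\end{equation*}
and a direct computation from the definition of $v$ shows that
\begin{equation*}
\int_{\mathbb{X}}v^{\alpha}\,dx \;=\; \frac{\|u\|_{L^{q}(\mathbb{X})}^{q}}{\|u\|_{L^{p}(\mathbb{X})}^{q}}.
\end{equation*}
The hypothesis $u\in L^{q}(\mathbb{X})$ is exactly what guarantees finiteness of this integral, so Jensen applies without further assumptions.

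It then remains to divide both sides by the positive constant $\alpha-1=(q-p)/p$, which yields
\begin{equation*}
\int_{\mathbb{X}}v\log(v)\,dx \;\leq\; \frac{p}{q-p}\,\log\!\left(\frac{\|u\|_{L^{q}(\mathbb{X})}^{q}}{\|u\|_{L^{p}(\mathbb{X})}^{q}}\right),
\end{equation*}
and to identify this expression with the target by pulling $q$ out of the logarithm and reabsorbing a factor of $p$, using the algebraic identity $\frac{p}{q-p}\cdot q = \frac{q}{q-p}\cdot p$. The only conceptual choices are the reference probability measure $d\mu$ and the auxiliary exponent $\alpha-1$; once these are fixed, the argument reduces to routine bookkeeping with exponents, and I do not anticipate any substantive obstacle beyond verifying that $v^{\alpha-1}\in L^{1}(d\mu)$, which is automatic from $u\in L^{q}(\mathbb{X})$.
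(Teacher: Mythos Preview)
Your proof is correct. However, the paper itself does not prove this statement: Theorem~\ref{theo2.1} is quoted without proof from \cite{chatzakou2021logarithmic} and then applied as a black box in the proof of Theorem~\ref{theo3.3}. There is therefore no in-paper argument to compare against; your Jensen-inequality approach with the probability measure $d\mu = |u|^{p}\|u\|_{L^{p}(\mathbb{X})}^{-p}\,dx$ and the auxiliary exponent $\alpha=q/p$ is a standard and complete derivation of the inequality.
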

Now we show the logarithmic fractional Hardy-Sobolev inequality on polarisable metric measure space.
\begin{theorem}\label{theo3.3}
    Let $\mathbb{X}$ be a metric measure space with a polar decomposition at $a$, and let $s>0$, $1<p<q<\infty$. Assume that $v(x)$ and $z(x)$ are non-negative measurable functions on $\mathbb{X}$. Suppose that $\left(q'\right)^{\frac{1}{q'}}q^{\frac{1}{q}}D_1<1$, where
    \begin{equation*}
    D_1=\sup_{x\neq0}\left[\left(\int_{\mathbb{X}\backslash B(a, |x|_a)}\frac{A(y)}{|y|_a^{sq}V^q(y)}\, dy\right)^{\frac{1}{q}}\left(\int_{B(a, |x|_a)}\left(\frac{A(y)v^{-q} (y)}{|y|_a^{sq}}\right)^{(1-q')}\, dy\right)^{\frac{1}{q'}}\right]<\infty,
    \end{equation*}
    \begin{equation*}
         V(x)=\int_{B(a,|x|_a)} v(y)\, dy,
    \end{equation*}
    and
    \begin{equation*}
         A(x):=\frac{1}{\left|B(a, |x|_a)\right|^{\frac{q}{p}}}\left(\int_{B(a,|x|_a)}\left(\frac{v^p(y)}{z(y)}\right)^{\frac{1}{p-1}}\, dy\right)^{-\frac{q}{p'}}V^q (x) v(x).
    \end{equation*}
    Under these conditions, we have the inequality
    \begin{equation}\label{logHardySovolev}
    \begin{split}
        &\int_{\mathbb{X}}\frac{\left(\frac{A^{\frac{1}{q}}|u|}{|\cdot|_a^s}\right)^p}{\left\|\frac{A^{\frac{1}{q}}u}{|\cdot|_a^s}\right\|^p_{L^p(\mathbb{X})}}\log\left(\frac{\left(\frac{A^{\frac{1}{q}}|u|}{|\cdot|_a^s}\right)^p}{\left\|\frac{A^{\frac{1}{q}}u}{|\cdot|_a^s}\right\|^p_{L^p(\mathbb{X})}}\right)\\
        &\leq\frac{q}{q-p}\log\left(C\frac{\left(\int_{\mathbb{X}}\left(\int_{\mathbb{X}}\frac{\left|u(x)-u(y)\right|^p z(x)}{d^{sp}(x,y)\left|B\left(a, \frac{d(x,y)}{2}\right)\right|}\,dx\right)^{\frac{q}{p}} v(y)\, dy\right)^{\frac{p}{q}}}{\left\|\frac{A^{\frac{1}{q}}u}{|\cdot|_a^s}\right\|^p_{L^p(\mathbb{X})}}\right), 
        \end{split}
    \end{equation}
    where C is a positive constant independent of $u$.
\end{theorem}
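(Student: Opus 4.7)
The proof is a clean combination of two already-established results: the logarithmic Hölder inequality (Theorem \ref{theo2.1}) and the fractional Hardy–Sobolev type inequality (Theorem \ref{theo 3.2}). The plan is to introduce the auxiliary function
\begin{equation*}
    w(x) := \frac{A^{1/q}(x)\,|u(x)|}{|x|_a^{s}},
\end{equation*}
and observe that the weighted $L^p$ and $L^q$ norms appearing in the conclusion and in Theorem \ref{theo 3.2} correspond exactly to $\|w\|_{L^p(\mathbb{X})}^p$ and $\|w\|_{L^q(\mathbb{X})}^q$ respectively. Concretely, $\|w\|_{L^q(\mathbb{X})}^q = \int_{\mathbb{X}} \frac{A(x)|u(x)|^q}{|x|_a^{sq}}\,dx$, which is precisely the left-hand side of \eqref{frac Sovolev}.

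First I would verify that $w\in L^p(\mathbb{X})\cap L^q(\mathbb{X})$ (otherwise the inequality is trivial or the statement should be understood in that sense), since this is the hypothesis of Theorem \ref{theo2.1}. Then applying Theorem \ref{theo2.1} with exponents $p<q$ immediately gives
\begin{equation*}
    \int_{\mathbb{X}} \frac{|w|^p}{\|w\|_{L^p(\mathbb{X})}^p} \log\!\left(\frac{|w|^p}{\|w\|_{L^p(\mathbb{X})}^p}\right) dx \leq \frac{q}{q-p}\log\!\left(\frac{\|w\|_{L^q(\mathbb{X})}^p}{\|w\|_{L^p(\mathbb{X})}^p}\right).
\end{equation*}
Since the hypotheses on $D_1$, $V$ and $A$ in this theorem are identical to those of Theorem \ref{theo 3.2}, that theorem applies and yields
\begin{equation*}
    \|w\|_{L^q(\mathbb{X})}^q \leq \frac{2^{sq}}{(1-(q')^{1/q'}q^{1/q}D_1)^q}\int_{\mathbb{X}} v(y)\left(\int_{\mathbb{X}}\frac{|u(x)-u(y)|^{p} z(x)}{d^{sp}(x,y)|B(a,d(x,y)/2)|}\,dx\right)^{q/p} dy.
\end{equation*}

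Raising both sides of this bound to the power $p/q$, I obtain an upper bound for $\|w\|_{L^q(\mathbb{X})}^p$ of the form $C\,(\text{seminorm})^{p/q}$, where $C = \bigl(2^{sq}/(1-(q')^{1/q'}q^{1/q}D_1)^q\bigr)^{p/q}$ is independent of $u$. Plugging this estimate into the argument of the logarithm in the log-Hölder inequality and using the monotonicity of $\log$ gives the claimed \eqref{logHardySovolev}.

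There is no genuine obstacle here; the only bookkeeping point I would be careful about is to keep the exponents $p$ and $q$ straight when passing from $\|w\|_{L^q}^q$ (the natural quantity on which Theorem \ref{theo 3.2} acts) to $\|w\|_{L^q}^p$ (the quantity required by Theorem \ref{theo2.1}), so that the resulting constant $C$ in the logarithm absorbs the $p/q$-th power of the Hardy–Sobolev constant. All other manipulations are purely formal substitutions.
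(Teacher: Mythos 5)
Your proposal is correct and follows exactly the paper's own argument: apply the logarithmic H\"older inequality (Theorem \ref{theo2.1}) to $w=A^{1/q}|u|/|\cdot|_a^{s}$ and then bound $\|w\|_{L^q(\mathbb{X})}^{p}$ inside the logarithm via the fractional Hardy--Sobolev inequality \eqref{frac Sovolev} raised to the power $p/q$. The only addition you make beyond the paper is the explicit remark that one should check $w\in L^p(\mathbb{X})\cap L^q(\mathbb{X})$ before invoking Theorem \ref{theo2.1}, which is a reasonable point of care rather than a difference in method.
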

\begin{proof}
   By logarithmic H\"{o}lder inequality \eqref{logHolder} and fractional Hardy-Sobolev type inequality \eqref{frac Sovolev}, we get
   \begin{equation*}
   \begin{split}
       &\int_{\mathbb{X}}\frac{\left(\frac{A^{\frac{1}{q}}|u|}{|\cdot|_a^s}\right)^p}{\left\|\frac{A^{\frac{1}{q}}u}{|\cdot|_a^s}\right\|^p_{L^p(\mathbb{X})}}\log\left(\frac{\left(\frac{A^{\frac{1}{q}}|u|}{|\cdot|_a^s}\right)^p}{\left\|\frac{A^{\frac{1}{q}}u}{|\cdot|_a^s}\right\|^p_{L^p(\mathbb{X})}}\right)\leq\frac{q}{q-p}\log\left(\frac{\left\|\frac{A^{\frac{1}{q}}u}{|\cdot|_a^s}\right\|^p_{L^q(\mathbb{X})}}{\left\|\frac{A^{\frac{1}{q}}u}{|\cdot|_a^s}\right\|^p_{L^p(\mathbb{X})}}\right)\\
       &\leq\frac{q}{q-p}\log\left(C\frac{\left(\int_{\mathbb{X}}\left(\int_{\mathbb{X}}\frac{\left|u(x)-u(y)\right|^p z(x)}{d^{sp}(x,y)\left|B\left(a, \frac{d(x,y)}{2}\right)\right|}\,dx\right)^{\frac{q}{p}} v(y)\, dy\right)^{\frac{p}{q}}}{\left\|\frac{A^{\frac{1}{q}}u}{|\cdot|_a^s}\right\|^p_{L^p(\mathbb{X})}}\right),
       \end{split}
   \end{equation*}
   completing the proof. 
\end{proof}
\subsection{Fractional Nash type inequality}
Finally we show the fractional Nash type inequality on metric measure spaces with polar decomposition.
\begin{theorem}
    Let $\mathbb{X}$ be a metric measure space with a polar decomposition at $a$, and let $s>0$, $2<q<\infty$. Assume that $v(x)$ and $z(x)$ are non-negative measurable functions on $\mathbb{X}$. Suppose that $\left(q'\right)^{\frac{1}{q'}}q^{\frac{1}{q}}D_1<1$, where
    \begin{equation*}
    D_1=\sup_{x\neq0}\left[\left(\int_{\mathbb{X}\backslash B(a, |x|_a)}\frac{A(y)}{|y|_a^{sq}V^q(y)}\, dy\right)^{\frac{1}{q}}\left(\int_{B(a, |x|_a)}\left(\frac{A(y)v^{-q} (y)}{|y|_a^{sq}}\right)^{(1-q')}\, dy\right)^{\frac{1}{q'}}\right]<\infty,
    \end{equation*}
    \begin{equation*}
         V(x)=\int_{B(a,|x|_a)} v(y)\, dy,
    \end{equation*}
    and
    \begin{equation*}
         A(x):=\frac{1}{\left|B(a, |x|_a)\right|^{\frac{q}{2}}}\left(\int_{B(a,|x|_a)}\left(\frac{v^2(y)}{z(y)}\right)\, dy\right)^{-\frac{q}{2}}V^q(x) v(x).
    \end{equation*}
    Then the following inequality holds
\begin{equation*}
    \left\|\frac{A^{\frac{1}{q}}u}{|\cdot|_a}\right\|^{4-\frac{4}{q}}_{L^2(\mathbb{X})}\leq C \left(\int_{\mathbb{X}}\left(\int_{\mathbb{X}}\frac{\left|u(x)-u(y)\right|^p z(x)}{d^{sp}(x,y)\left|B\left(a, \frac{d(x,y)}{2}\right)\right|}\,dx\right)^{\frac{q}{2}} v(y)\, dy\right)^{\frac{2}{q}}\left\|\frac{A^{\frac{1}{q}}u}{|\cdot|_a}\right\|^{\frac{2(q-2)}{q}}_{L^1(\mathbb{X})},
\end{equation*}
    where C is a positive constant independent of $u$.
\end{theorem}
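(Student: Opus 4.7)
The plan is to derive the claimed Nash-type bound from the fractional Hardy-Sobolev inequality of Theorem \ref{theo 3.2} (specialised to $p=2$) combined with the standard three-point H\"{o}lder interpolation between $L^1$, $L^2$ and $L^q$. This mirrors the classical route to the Nash inequality on $\mathbb{R}^n$.

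The first step is to observe that the weight $A(x)$ written in the statement is precisely the weight appearing in Theorem \ref{theo 3.2} evaluated at $p=2$: in that case $p'=2$, $(p-1)^{-1}=1$, and both exponents $q/p$ and $q/p'$ become $q/2$, matching the formula given here. Consequently, setting
$$f(x):=\frac{A^{1/q}(x)\,u(x)}{|x|_a^{s}},$$
the hypothesis on $D_1$ coincides with that of Theorem \ref{theo 3.2}, and the theorem yields
$$\|f\|_{L^q(\mathbb{X})}^{q}\leq C\,J,\qquad J:=\int_{\mathbb{X}}v(y)\left(\int_{\mathbb{X}}\frac{|u(x)-u(y)|^{2}\,z(x)}{d^{2s}(x,y)\,\bigl|B(a,d(x,y)/2)\bigr|}\,dx\right)^{\!q/2}dy,$$
with $C$ depending only on $s$, $q$ and $D_1$. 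In the display in the theorem, the exponent $p$ inside the inner integrand is therefore to be read as $p=2$, consistent with the outer power $q/2$.

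Next I would apply the H\"{o}lder interpolation $\|f\|_{L^2}\leq\|f\|_{L^1}^{1-\theta}\,\|f\|_{L^q}^{\theta}$, which holds for $1<2<q<\infty$ with $\tfrac{1}{2}=(1-\theta)+\tfrac{\theta}{q}$. Solving gives $\theta=\tfrac{q}{2(q-1)}$ and $1-\theta=\tfrac{q-2}{2(q-1)}$. Raising both sides to the power $4(q-1)/q$ cleans up the exponents to
$$\|f\|_{L^2(\mathbb{X})}^{\,4-4/q}\leq \|f\|_{L^1(\mathbb{X})}^{\,2(q-2)/q}\,\|f\|_{L^q(\mathbb{X})}^{\,2}.$$
Finally, inserting $\|f\|_{L^q(\mathbb{X})}^{\,2}=\bigl(\|f\|_{L^q}^{q}\bigr)^{2/q}\leq C^{2/q}\,J^{2/q}$ from the Hardy-Sobolev step yields the Nash inequality as stated, with a constant depending only on $s$, $q$ and $D_1$.

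There is no genuine analytic obstacle in this plan. The only point requiring care is the initial identification of the weight $A$ in the present statement with the $p=2$ instance of the weight in Theorem \ref{theo 3.2}, so that the Hardy-Sobolev bound may be invoked verbatim; once this is done, the interpolation and exponent arithmetic are entirely routine.
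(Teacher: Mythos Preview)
Your argument is correct, but it differs from the paper's route. The paper proves the Nash-type inequality by passing through the \emph{logarithmic} Hardy--Sobolev inequality (Theorem~\ref{theo3.3}): with $g=A^{1/q}u\,|\cdot|_a^{-s}$ they first apply Jensen's inequality to the convex function $t\mapsto\log(1/t)$ to obtain
\[
\log\!\left(\frac{\|g\|_{L^2}^{2}}{\|g\|_{L^1}}\right)\le \int_{\mathbb X}\frac{|g|^{2}}{\|g\|_{L^2}^{2}}\log|g|\,dx,
\]
then bound the right-hand side via \eqref{logHardySovolev} (with $p=2$), and finally solve the resulting logarithmic inequality for $\|g\|_{L^2}$. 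You instead invoke Theorem~\ref{theo 3.2} (with $p=2$) directly to control $\|g\|_{L^q}$, and then apply the elementary H\"older interpolation $\|g\|_{L^2}\le\|g\|_{L^1}^{1-\theta}\|g\|_{L^q}^{\theta}$ with $\theta=q/(2(q-1))$; your exponent bookkeeping is correct and yields exactly the stated inequality. Your path is the standard, more economical derivation of Nash from Sobolev and avoids the detour through the entropy/log step; the paper's route has the advantage of exhibiting the Nash inequality as a corollary of the logarithmic Hardy--Sobolev theorem just proved, which is presumably why they chose it. Both arguments ultimately rest on the same Hardy--Sobolev bound \eqref{frac Sovolev} and are equivalent in strength.
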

\begin{proof}
    Let us denote $g(x):=A^{\frac{1}{q}}(x)u(x)|x|_a^{-s}$. By using Jensen inequality for the convex function $h(u)=\log\left(\frac{1}{u}\right)$, we have
    \begin{equation}\label{Jensen}
    \begin{split}
        \log\left(\frac{\left\|g\right\|^2_{L^2(\mathbb{X})}}{\left\|g\right\|_{L^1(\mathbb{X})}}\right)&=h\left(\frac{\left\|g\right\|_{L^1(\mathbb{X})}}{\left\|g\right\|^2_{L^2(\mathbb{X})}}\right)=h\left(\int_{\mathbb{X}}\frac{1}{|g(x)|}|g(x)|^2\left\|g\right\|^{-2}_{L^2(\mathbb{X})}\, dx\right)\\
        &\leq\int_{\mathbb{X}}h\left(\frac{1}{|g(x)|}\right)|g(x)|^2\left\|g\right\|^{-2}_{L^2(\mathbb{X})}\, dx=\int_{\mathbb{X}}\frac{|g(x)|^2}{\left\|g\right\|^2_{L^2(\mathbb{X})}}\log|g(x)|\, dx.
        \end{split}
    \end{equation}
    Then by the logarithmic Hardy-Sobolev inequality \eqref{logHardySovolev} for $p=2$, we get
    \begin{equation}\label{logatithmicineq}
        \begin{split}
        &\int_{\mathbb{X}}\frac{|g(x)|^2}{\left\|g\right\|^2_{L^2(\mathbb{X})}}\log|g(x)|\, dx=
            \frac{1}{2}\int_{\mathbb{X}}\frac{|g(x)|^2}{\left\|g\right\|^2_{L^2(\mathbb{X})}}\log\left(\frac{|g(x)|^2}{\left\|g\right\|^2_{L^2(\mathbb{X})}}\left\|g\right\|^2_{L^2(\mathbb{X})}\right)\, dx\\
            &=\frac{1}{2}\int_{\mathbb{X}}\frac{|g(x)|^2}{\left\|g\right\|^2_{L^2(\mathbb{X})}}\log\left(\frac{|g(x)|^2}{\left\|g\right\|^2_{L^2(\mathbb{X})}}\right)\,dx+\int_{\mathbb{X}}\frac{|g(x)|^2}{\left\|g\right\|^2_{L^2(\mathbb{X})}}\log\left\|g\right\|_{L^2(\mathbb{X})}\, dx\\
            &\leq\frac{q}{2(q-2)}\log\left(C\frac{\left(\int_{\mathbb{X}}\left(\int_{\mathbb{X}}\frac{\left|u(x)-u(y)\right|^p z(x)}{d^{sp}(x,y)\left|B\left(a, \frac{d(x,y)}{2}\right)\right|}\,dx\right)^{\frac{q}{2}} v(y)\, dy\right)^{\frac{2}{q}}}{\left\|g\right\|^2_{L^2(\mathbb{X})}}\right)+\log\left\|g\right\|_{L^2(\mathbb{X})}\\
            &=\frac{q}{2(q-2)}\log\left(C\left(\int_{\mathbb{X}}\left(\int_{\mathbb{X}}\frac{\left|u(x)-u(y)\right|^p z(x)}{d^{sp}(x,y)\left|B\left(a, \frac{d(x,y)}{2}\right)\right|}\,dx\right)^{\frac{q}{2}} v(y)\, dy\right)^{\frac{2}{q}}\right)\\
            &-\frac{q}{q-2}\|g\|_{L^2(\mathbb{X})}+\log\left\|g\right\|_{L^2(\mathbb{X})}.
        \end{split}
    \end{equation}
    By using \eqref{Jensen} and \eqref{logatithmicineq}, we have
    \begin{equation}
        \begin{split}
            &2\log\left\|g\right\|_{L^2(\mathbb{X})}-\log\left\|g\right\|_{L^1(\mathbb{X})}-\log\left\|g\right\|_{L^2(\mathbb{X})}+\frac{q}{q-2}\log\left\|g\right\|_{L^2(\mathbb{X})}\\
            &\leq\frac{q}{2(q-2)}\log\left(C\left(\int_{\mathbb{X}}\left(\int_{\mathbb{X}}\frac{\left|u(x)-u(y)\right|^p z(x)}{d^{sp}(x,y)\left|B\left(a, \frac{d(x,y)}{2}\right)\right|}\,dx\right)^{\frac{q}{2}} v(y)\, dy\right)^{\frac{2}{q}}\right).
        \end{split}
    \end{equation}
    Therefore, we obtain
    \begin{equation*}
    \begin{split}
         &\left(1+\frac{q}{q-2}\right)\log\left\|g\right\|_{L^2(\mathbb{X})}\\
         &\leq\frac{q}{2(q-2)}\log\left(C\left\|g\right\|^{\frac{2(q-2)}{q}}_{L^1(\mathbb{X})}\left(\int_{\mathbb{X}}\left(\int_{\mathbb{X}}\frac{\left|u(x)-u(y)\right|^p z(x)}{d^{sp}(x,y)\left|B\left(a, \frac{d(x,y)}{2}\right)\right|}\,dx\right)^{\frac{q}{2}} v(y)\, dy\right)^{\frac{2}{q}}\right),
    \end{split}
    \end{equation*}
    and hence
    \begin{equation*}
            \left\|g\right\|^{4-\frac{4}{q}}_{L^2(\mathbb{X})}\leq\left(C\left\|g\right\|^{\frac{2(q-2)}{q}}_{L^1(\mathbb{X})}\left(\int_{\mathbb{X}}\left(\int_{\mathbb{X}}\frac{\left|u(x)-u(y)\right|^p z(x)}{d^{sp}(x,y)\left|B\left(a, \frac{d(x,y)}{2}\right)\right|}\,dx\right)^{\frac{q}{2}} v(y)\, dy\right)^{\frac{2}{q}}\right).
    \end{equation*}
    We can write this in the following form
    \begin{equation*}
          \left\|\frac{A^{\frac{1}{q}}u}{|\cdot|_a}\right\|^{4-\frac{4}{q}}_{L^2(\mathbb{X})}\leq C \left(\int_{\mathbb{X}}\left(\int_{\mathbb{X}}\frac{\left|u(x)-u(y)\right|^p z(x)}{d^{sp}(x,y)\left|B\left(a, \frac{d(x,y)}{2}\right)\right|}\,dx\right)^{\frac{q}{2}} v(y)\, dy\right)^{\frac{2}{q}}\left\|\frac{A^{\frac{1}{q}}u}{|\cdot|_a}\right\|^{\frac{2(q-2)}{q}}_{L^1(\mathbb{X})},
    \end{equation*}
    which completes the proof.
\end{proof}
\section{Applications}In this section, we show some consequences of the fractional Hardy and fractional Hardy-Sobolev type inequalities on homogeneous Lie groups and on the Heisenberg group.
\subsection{Homogeneous groups.} A Lie group (on $\mathbb{R}^n$) $\mathbb{G}$ is called homogeneous Lie group, if there is a dilation $D_{\lambda}(x)$ such that 
\begin{equation*}
    D_{\lambda}(x):=(\lambda^{\nu_1}x_1, \dots, \lambda^{\nu_n}x_n), \,\, \nu_1,\dots,\nu_n>0, \,\, D_\lambda:\mathbb{R}^n\rightarrow \mathbb{R}^n,
\end{equation*}
and $D_{\lambda}$ is an automorphism of the group $\mathbb{G}$ for all $\lambda>0$. The homogeneous dimension of the homogeneous group $\mathbb{G}$ is denoted by $Q:=\nu_1+\dots+\nu_n$. We also denote a homogeneous quasi-norm on $\mathbb{G}$ by $|x|$, which is a continuous non-negative function
\begin{equation*}
    \mathbb{G}\ni x\mapsto |x|\in[0, \infty),
\end{equation*}
which has the following properties
\begin{enumerate}[label=\Roman*)]
    \item $|x|=|x^{-1}|$ for  all $x\in\mathbb{G}$, 
    \item $|\lambda x|=\lambda|x|$, for all $x\in\mathbb{G}$, and $\lambda>0$,
    \item $|x|=0$ if and only if $x=0$.
\end{enumerate}
\begin{prop}[see e.g. \cite{Folland}, \cite{Fischer}, Proposition 3.1.39 and \cite{RuzhanskySuragan}, Proposition 1.2.4] Let $\mathbb{G}$ be a homogeneous group. Then there always exists a homogeneous quasi-norm $|\cdot|$ on $\mathbb{G}$ which satisfies the triangle inequality (with constant C = 1)
\begin{equation*}
    |xy|\leq|x|+|y|
\end{equation*}
    for all $x, y\in\mathbb{G}$.  
\end{prop}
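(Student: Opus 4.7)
The plan is to establish this classical fact in three steps: construct a candidate continuous homogeneous quasi-norm, verify that it satisfies a quasi-triangle inequality with some constant $C\geq 1$, and then upgrade this to the genuine triangle inequality with constant $1$ via an appropriate regularization.

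First, I would produce an explicit continuous homogeneous quasi-norm on $\mathbb{G}$. Identifying $\mathbb{G}$ with $\mathbb{R}^n$, pick an integer $N$ large enough that each $2N/\nu_i$ is an even positive integer, and set
\[
|x|_0 := \left( \sum_{i=1}^n x_i^{2N/\nu_i} \right)^{1/(2N)}.
\]
From the definition of $D_\lambda$ one immediately verifies $|D_\lambda x|_0 = \lambda |x|_0$, while continuity, smoothness away from the origin, and $|x|_0=0 \iff x=0$ are clear. The symmetry property $|x^{-1}|_0 = |x|_0$ can then be enforced by passing to $\tfrac{1}{2}\bigl(|x|_0 + |x^{-1}|_0\bigr)$, which still satisfies the properties (I)--(III).

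Next, I would derive a quasi-triangle inequality. The unit sphere $S = \{x\in\mathbb{G} : |x|_0 = 1\}$ is compact, and the map $(x,y)\mapsto |xy|_0$ is continuous, so $2C := \sup_{x,y\in S} |xy|_0$ is finite with $C\geq 1$. Applying this estimate to the dilated pair $D_{1/(|x|_0+|y|_0)}x$ and $D_{1/(|x|_0+|y|_0)}y$ and using homogeneity yields the quasi-triangle inequality $|xy|_0 \leq C(|x|_0 + |y|_0)$ for all $x,y\in\mathbb{G}$.

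The final step, reducing the constant to $1$, is the main obstacle, and I would follow the strategy of Hebisch and Sikora. Naive rescalings such as $|\cdot|_0^{\alpha}$ destroy degree-one homogeneity and cannot be used; instead one builds the new quasi-norm from a smooth, non-negative, compactly supported, symmetric bump $\psi$ on $\mathbb{G}$, by averaging it along the dilation flow $\{D_t\}_{t>0}$ and taking the Minkowski functional of an appropriate sublevel set of the resulting function. The essential geometric condition to be checked is that the candidate unit ball $B = \{x : |x|\leq 1\}$ satisfies the inclusion $B \cdot D_\lambda B \subseteq D_{1+\lambda} B$ for every $\lambda > 0$; by homogeneity this is exactly equivalent to $|xy| \leq |x|+|y|$. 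Verifying this inclusion is where the smoothing by $\psi$ is used in a crucial way, and it is the heart of the argument; the remaining quasi-norm properties (homogeneity, symmetry, and positivity) transfer from $|\cdot|_0$ to $|\cdot|$ essentially by construction.
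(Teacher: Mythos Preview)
The paper does not actually prove this proposition: it is stated with citations to \cite{Folland}, \cite{Fischer} (Proposition 3.1.39) and \cite{RuzhanskySuragan} (Proposition 1.2.4) and used as a black box, so there is no argument in the paper to compare your proposal against.

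As for the proposal itself, the overall architecture is the right one (build a smooth homogeneous quasi-norm, get a quasi-triangle inequality by compactness of the unit sphere, then upgrade to constant $1$ via a Hebisch--Sikora type construction), but the decisive step is not carried out. You correctly reformulate the triangle inequality as the set inclusion $B\cdot D_\lambda B\subseteq D_{1+\lambda}B$ and say that ``verifying this inclusion is where the smoothing by $\psi$ is used in a crucial way, and it is the heart of the argument''; however, you give no mechanism by which the smoothing actually produces this inclusion. At that point you have only restated the problem in equivalent geometric language. Moreover, the description of the construction (averaging a bump $\psi$ along the dilation flow and taking a Minkowski functional of a sublevel set) does not match the actual Hebisch--Sikora argument, which instead shows directly that a sufficiently small \emph{Euclidean} ball $B_\varepsilon=\{x:\|x\|_E<\varepsilon\}$ satisfies $B_\varepsilon\cdot B_\varepsilon\subseteq D_2 B_\varepsilon$ by exploiting the polynomial form of the group law together with its homogeneity; the quasi-norm is then the dilation gauge of $B_\varepsilon$. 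A minor additional issue: your choice of $N$ with all $2N/\nu_i$ even integers tacitly assumes the weights $\nu_i$ are rational, which need not hold in general; one usually takes $|x|_0=\bigl(\sum_i |x_i|^{1/\nu_i}\bigr)$ or a similar expression that avoids this.
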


There is a (unique) positive Borel measure $\sigma$ on the unit quasi-sphere $\mathfrak{S}:=\{x\in\mathbb{G}:|x|=1\}$, so that for every $f\in L^1(\mathbb{G})$, we have 
\begin{equation}\label{polar decomposition of homogenuous groups}
   \int_{\mathbb{G}}f(x)\,dx=\int_0^{\infty}\int_{\mathfrak{S}}f(ry)r^{Q-1}\,d\sigma dr.
\end{equation}
Let us define the quasi-ball centered at $x$ with radius $r$ in the following form
\begin{equation*}
    B(x,r)=\{y\in\mathbb{G}: |x^{-1}y|<r\}. 
\end{equation*}

Let $p>1$ and $s\in(0,1)$. For a measurable function $u:\mathbb{G}\rightarrow \mathbb{R}$, the Gagliardo seminorm is given by
\begin{equation*}
    [u]_{s,p}:=\left(\int_{\mathbb{G}}\int_{\mathbb{G}}\frac{|u(x)-u(y)|^p}{|y^{-1}x|^{sp+Q}}\,dxdy\right)^{\frac{1}{p}}. 
\end{equation*}
The fractional Sobolev space $W^{s,p}(\mathbb{G})$ on the homogeneous Lie group $\mathbb{G}$ is given by
\begin{equation*}
    W^{s,p}(\mathbb{G}):=\left\{u\in L^p(\mathbb{G}): [u]_{s,p}<\infty\right\}, 
\end{equation*}
endowed with the norm 
\begin{equation*}
    \|u\|_{W^{s,p}(\mathbb{G})}:=\|u\|_{L^p(\mathbb{G})}+[u]_{s,p}, \quad u\in W^{s,p}(\mathbb{G}).
\end{equation*}
\begin{theorem}[Fractional Hardy inequality]\label{FracHardyonHomogeneousGroups}
    Let $\mathbb{G}$ be a homogeneous Lie group of homogeneous dimension $Q$, and let $|\cdot|$ be a homogeneous quasi-norm on $\mathbb{G}$. Suppose that $s>0$, $p>1$, and $Q<sp$. Then for all $u\in W^{s,p}(\mathbb{G})$, we have the following inequality
\begin{equation}\label{fractionalHardyonHomogeneuosGroups}
    \int_{\mathbb{G}}\frac{|u(x)|^p}{|x|^{sp}}\, dx\leq \frac{2^{sp+Q}Q(sp+Qp-Q)^p}{|\mathfrak{S}|(sp-Q)^p} [u]_{s,p}^{p}.
\end{equation}
\end{theorem}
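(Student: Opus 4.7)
The plan is to apply the abstract fractional Hardy inequality (Theorem \ref{frac Hardy ineq}) with $a$ taken to be the group identity $0\in\mathbb{G}$ and with the trivial weight $v(x,y)\equiv 1$. This choice collapses the definition \eqref{1} to $A(x)\equiv 1$, so the left-hand side of \eqref{Hardy} becomes exactly $\int_{\mathbb{G}}|u(x)|^p/|x|^{sp}\,dx$, which is precisely the quantity we want to bound.

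Next I would compute $D_1$ from \eqref{2} explicitly. On a homogeneous group the polar decomposition \eqref{polar decomposition of homogenuous groups} and the volume identity $|B(0,r)|=|\mathfrak{S}|r^Q/Q$ reduce each of the two integrals defining $D_1$ to an elementary one-dimensional radial integral. The outer integral converges at infinity precisely when $sp+Qp>Q$ (which is automatic under the assumption $sp>Q$), while the inner integral converges at the origin for all $p>1$ and $s>0$. After the elementary computation, the powers of $|x|$ cancel between the two factors and the $|\mathfrak{S}|$-dependence also cancels, leaving
$$D_1 = \frac{Q(p-1)^{1/p'}}{sp+Qp-Q},$$
independent of $x$. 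Using the identity $(p')^{1/p'}(p-1)^{1/p'}=p^{1/p'}$, this simplifies to $(p')^{1/p'}p^{1/p}D_1 = pQ/(sp+Qp-Q)$, which is strictly less than $1$ exactly when $sp>Q$. Hence the smallness hypothesis in Theorem \ref{frac Hardy ineq} is satisfied, and the theorem is applicable.

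The last step is to convert the right-hand side of \eqref{Hardy} into the Gagliardo seminorm. Since $d(x,y)=|y^{-1}x|$ and $|B(0,d(x,y)/2)|=|\mathfrak{S}|d(x,y)^Q/(Q\cdot 2^Q)$, the integrand on the right of \eqref{Hardy} equals $(Q\cdot 2^Q/|\mathfrak{S}|)|u(x)-u(y)|^p/|y^{-1}x|^{sp+Q}$, so the whole double integral equals $(Q\cdot 2^Q/|\mathfrak{S}|)\,[u]_{s,p}^{p}$. Multiplying by the explicit prefactor
$$\frac{2^{sp}}{(1-pQ/(sp+Qp-Q))^p} = \frac{2^{sp}(sp+Qp-Q)^p}{(sp-Q)^p}$$
yields precisely the constant $2^{sp+Q}Q(sp+Qp-Q)^p/(|\mathfrak{S}|(sp-Q)^p)$ asserted in \eqref{fractionalHardyonHomogeneuosGroups}.

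The argument is essentially bookkeeping: the only genuine verification is that the $|x|$-dependence in the two factors of \eqref{2} cancels and that the algebraic simplification of $(p')^{1/p'}p^{1/p}D_1$ produces the expected threshold $sp=Q$. No analytic input is required beyond what is already contained in Theorem \ref{frac Hardy ineq} and the polar decomposition on $\mathbb{G}$.
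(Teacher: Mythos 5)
Your proposal is correct and follows essentially the same route as the paper's proof: apply Theorem \ref{frac Hardy ineq} with $a=0$ and $v\equiv 1$ (so $A\equiv 1$), compute $D_1=\frac{Q(p-1)^{1/p'}}{sp+Qp-Q}$ via the polar decomposition and $|B(0,r)|=|\mathfrak{S}|r^Q/Q$, check $(p')^{1/p'}p^{1/p}D_1=\frac{Qp}{sp+Qp-Q}<1$ from $Q<sp$, and convert $|B(0,d(x,y)/2)|$ into the Gagliardo kernel to obtain the stated constant. All the algebraic simplifications you describe match the paper's computation.
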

\begin{proof}
    First, let us verify the condition \eqref{2} for $\mathbb{X}=\mathbb{G}$, $a=0$ (identity element of $\mathbb{G}$), and $d(x,y)=|y^{-1}x|$ with $A(x)=v(x,y)=1$. By noting that $|B(0, |x|)|=\frac{|\mathfrak{S}||x|^Q}{Q}$ (cf. \eqref{polar decomposition of homogenuous groups}), we obtain
\begin{equation*}
    \begin{split}
        \int_{\mathbb{G}\backslash B(0,|x|)}&|y|^{-sp}|B(0,|y|)|^{-p}\,dy\stackrel{\eqref{polar decomposition of homogenuous groups}}{=}\frac{|\mathfrak{S}|^{-p}}{Q^{-p}}\int_{|x|}^{+\infty}\int_{\mathfrak{S}}r^{-sp-Qp+Q-1}\,d\sigma(y)dr\\
        &\stackrel{-sp-Qp+Q<0}{=}\frac{|\mathfrak{S}|^{1-p}Q^{p}}{(sp+Qp-Q)}|x|^{-sp-Qp+Q}, 
    \end{split}
\end{equation*}
and
\begin{equation*}
        \int_{B(0,|x|)}|y|^{-sp(1-p')}\,dy\stackrel{\eqref{polar decomposition of homogenuous groups}}{=}\int_{0}^{|x|}\int_{\mathfrak{S}}r^{sp'+Q-1}\,d\sigma(y)dr\stackrel{sp'+Q>0}{=}\frac{|\mathfrak{S}|}{sp'+Q}|x|^{sp'+Q}. 
\end{equation*}
Therefore
\begin{equation*}
\begin{split}
    D_1&=\sup_{x\neq 0}\left[\frac{Q|\mathfrak{S}|^{\frac{1-p}{p}+\frac{1}{p'}}}{(sp+Qp-Q)^{\frac{1}{p}}(sp'+Q)^{\frac{1}{p'}}}|x|^{\frac{-sp-Qp+Q}{p}+\frac{sp'+Q}{p'}}\right]\\
    &=\frac{Q}{(sp+Qp-Q)^{\frac{1}{p}}\left(s\frac{p}{p-1}+Q\right)^{\frac{1}{p'}}}=\frac{Q(p-1)^{\frac{1}{p'}}}{sp+Qp-Q}<\infty.
\end{split}
\end{equation*}
By using $Q<sp$, we have
\begin{equation*}
    (p')^{\frac{1}{p'}}p^{\frac{1}{p}}D_1=\frac{Qp}{sp+Qp-Q}<\frac{Qp}{Qp}=1.
\end{equation*} 
By applying the inequality \eqref{Hardy}, and using $|B(0, |x|)|=\frac{|\mathfrak{S}||x|^Q}{Q}$, we get 
\begin{equation*}
\begin{split}
    \int_{\mathbb{G}}\frac{|u(x)|^p}{|x|^{sp}}\, dx&\leq\frac{2^{sp}}{\left(1-\frac{Qp}{sp+Qp-Q}\right)^p}\int_{\mathbb{G}}\int_{\mathbb{G}}\frac{|u(x)-u(y)|^p}{|y^{-1}x|^{sp}|B(0, \frac{|y^{-1}x|}{2})|}\,dxdy\\
    &=\frac{2^{sp+Q}Q(sp+Qp-Q)^p}{|\mathfrak{S}|(sp-Q)^p}\int_{\mathbb{G}}\int_{\mathbb{G}}\frac{|u(x)-u(y)|^p}{|y^{-1}x|^{sp+Q}}\,dxdy, 
    \end{split}
\end{equation*}
completing the proof. 
\end{proof}
\begin{rem}
    In the Abelian (Euclidean) case $\mathbb{G}=(\mathbb{R}^n, +)$, $Q=n$, we have the fractional Hardy inequality under the condition $n<sp$. 
\end{rem}
\begin{theorem}[Fractional Hardy-Sobolev type inequality] Let $\mathbb{G}$ be a homogeneous Lie group of homogeneous dimension $Q$, and let $|\cdot|$ be a homogeneous quasi-norm on $\mathbb{G}$. Assume that $s>0$, $1<p,q<\infty$, and $Q<sp$. Then the following inequality 
\begin{equation}\label{fractionalHadrySobolevinequalityonHomogeneousGroups}
    \int_{\mathbb{G}}\frac{|u(x)|^q}{|x|^{sq}}\, dx\leq \frac{2^{sq+\frac{Qq}{p}}Q^{\frac{q}{p}}(sq+Qq-Q)^q}{|\mathfrak{S}|^{\frac{q}{p}}(sq-Q)^q}\int_{\mathbb{G}}\left(\int_{\mathbb{G}}\frac{|u(x)-u(y)|^p}{|y^{-1}x|^{sp+Q}}\, dx\right)^{\frac{q}{p}}\,dy
\end{equation}
holds for all $u\in W^{s,p}(\mathbb{G})$.
\end{theorem}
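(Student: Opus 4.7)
The plan is to apply the abstract fractional Hardy--Sobolev inequality of Theorem \ref{theo 3.2} to the homogeneous group setting, mimicking the strategy used in the proof of Theorem \ref{FracHardyonHomogeneousGroups}. Concretely, I would take $\mathbb{X}=\mathbb{G}$, set $a=0$ (the identity), use the left-invariant quasi-distance $d(x,y)=|y^{-1}x|$, and choose the weight functions $v(x)=z(x)=1$.

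With this choice, the polar decomposition \eqref{polar decomposition of homogenuous groups} gives $|B(0,|x|)| = |\mathfrak{S}||x|^Q/Q$, hence $V(x) = |B(0,|x|)| = |\mathfrak{S}||x|^Q/Q$ and, from the definition in Theorem \ref{theo 3.2}, $A(x)\equiv 1$ because the factors $|B(0,|x|)|^{-q/p}|B(0,|x|)|^{-q/p'}V^q(x) = |B(0,|x|)|^{-q}V^q(x)$ cancel. So the left-hand side of \eqref{frac Sovolev} collapses to $\int_{\mathbb{G}}|u(x)|^q/|x|^{sq}\,dx$, exactly matching the left-hand side of \eqref{fractionalHadrySobolevinequalityonHomogeneousGroups}.

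Next I would verify the structural condition $(q')^{1/q'}q^{1/q}D_1<1$ by an explicit polar-coordinate computation, entirely parallel to the calculation in Theorem \ref{FracHardyonHomogeneousGroups} but with $q$ playing the role of $p$. Using
\begin{equation*}
\int_{\mathbb{G}\setminus B(0,|x|)}\frac{dy}{|y|^{sq}V^q(y)}=\frac{Q^q}{|\mathfrak{S}|^{q-1}(sq+Qq-Q)}|x|^{Q-sq-Qq},\quad \int_{B(0,|x|)}|y|^{sq'}\,dy=\frac{|\mathfrak{S}|}{sq'+Q}|x|^{sq'+Q},
\end{equation*}
the exponents of $|x|$ will cancel and the $|\mathfrak{S}|$ factors combine to give $D_1=Q(q-1)^{1/q'}/(sq+Qq-Q)$. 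Then the identity $(q')^{1/q'}(q-1)^{1/q'}=q^{1/q'}$ yields $(q')^{1/q'}q^{1/q}D_1 = qQ/(sq+Qq-Q)$, which is strictly less than $1$ under the hypothesis $Q<sq$ (equivalently the condition that appears in the stated constant $(sq-Q)^q$ in the denominator). Consequently $1-(q')^{1/q'}q^{1/q}D_1 = (sq-Q)/(sq+Qq-Q)$.

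Finally, I would rewrite the right-hand side of \eqref{frac Sovolev} using $|B(0,|y^{-1}x|/2)|=|\mathfrak{S}||y^{-1}x|^Q/(Q\cdot 2^Q)$ to produce the kernel $|y^{-1}x|^{-sp-Q}$ (times a factor $Q\cdot 2^Q/|\mathfrak{S}|$ raised to $q/p$ after pulling the constants through the inner $L^p$-norm), and then combine all the resulting constants: the Minkowski factor $2^{sq}$, the factor $(Q\cdot 2^Q/|\mathfrak{S}|)^{q/p}$ from the ball-measure rewriting, and the $(sq+Qq-Q)^q/(sq-Q)^q$ factor from the bound on $D_1$. This assembly yields exactly the constant $2^{sq+Qq/p}Q^{q/p}(sq+Qq-Q)^q/(|\mathfrak{S}|^{q/p}(sq-Q)^q)$ displayed in \eqref{fractionalHadrySobolevinequalityonHomogeneousGroups}. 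No step is a genuine obstacle; the only care required is the bookkeeping in the $D_1$ computation, namely tracking the cancellation of the $|x|$-exponents and of the $|\mathfrak{S}|$-powers, which ensures that $D_1$ is a finite constant independent of $x$.
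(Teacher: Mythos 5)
Your proposal is correct and is exactly the route the paper intends: the paper's own ``proof'' is a one-line reference to the argument of Theorem \ref{FracHardyonHomogeneousGroups}, i.e.\ apply Theorem \ref{theo 3.2} with $v=z=1$ (so that $A\equiv 1$ and $V(x)=|B(0,|x|)|=|\mathfrak{S}||x|^Q/Q$), verify \eqref{14} by the polar decomposition \eqref{polar decomposition of homogenuous groups}, and assemble the constants, and your computations of $D_1=Q(q-1)^{1/q'}/(sq+Qq-Q)$, of $1-(q')^{1/q'}q^{1/q}D_1=(sq-Q)/(sq+Qq-Q)$, and of the final constant all check out. You also correctly note that the smallness condition actually requires $Q<sq$ rather than the stated $Q<sp$ (consistent with the factor $(sq-Q)^q$ in the denominator of \eqref{fractionalHadrySobolevinequalityonHomogeneousGroups}), which is a hypothesis issue in the theorem statement rather than a gap in your argument.
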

\begin{proof}
   The proof of this theorem follows the similar approach to the one used in proving Theorem \ref{FracHardyonHomogeneousGroups}.
\end{proof}
\begin{rem}
    The inequality \eqref{fractionalHadrySobolevinequalityonHomogeneousGroups}, as a special case, implies the Abelian (Euclidean) case $\mathbb{G}=(\mathbb{R}^n, +)$, $Q=n$, under the condition $n<sq$.
\end{rem}
\subsection{Heisenberg group.}
The Heisenberg group $\mathbb{H}^n$ is defined as
\begin{equation*}
    \mathbb{H}^n:=\{\xi=(x, y, t): x\in\mathbb{R}^n, y\in\mathbb{R}^n, t\in\mathbb{R}\}. 
\end{equation*}
Composition law in this group is given by
\begin{equation*}
    \xi\circ\xi'=\left(x+x', y+y', t+t'+\frac{1}{2}(xy'-x'y)\right), 
\end{equation*}
where $\xi'=(x', y', t')\in\mathbb{H}^n$, and $xy$ denotes their standard Euclidean inner product. The inverse element in $\mathbb{H}^n$ is $\xi^{-1}=-\xi$. The homogeneous dimension of the Heisenberg group is $Q=2n+2$. 

For $\xi=(x, y, t)\in\mathbb{H}^n$, define the Koranyi-Folland distance
\begin{equation*}
    d(\xi, 0):=((|x|^2+|y|^2)^2+|t|^2)^{\frac{1}{4}}. 
\end{equation*}
For simplicity, we denote $d(\xi, 0)=d(\xi)$. 
\begin{prop}[\cite{Bonfiglioli}, Proposition 5.14.1]\label{tringle ineq of Koranyi-Folland measure} 
    Let $\mathbb{H}^n$ be a Heisenberg group, and let $d(\cdot)$ be the Koranyi-Folland distance. Then there exists a constant $\beta\geq1$ such that
    \begin{equation*}
        d(\xi\circ\xi')\leq\beta d(\xi)+d(\xi'), \quad \xi, \xi'\in\mathbb{H}^n.
    \end{equation*}
\end{prop}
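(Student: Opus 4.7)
The plan is to establish the asymmetric quasi-triangle inequality via homogeneity under the Heisenberg dilations combined with a compactness argument on the unit quasi-sphere. First, I would record the structural ingredients: $d$ is continuous on $\mathbb{H}^n$, vanishes only at the origin, and is homogeneous of degree one with respect to the anisotropic dilations $\delta_\lambda(x,y,t)=(\lambda x,\lambda y,\lambda^2 t)$, which are group automorphisms of $\mathbb{H}^n$. Consequently $\delta_\lambda(\xi\circ\xi')=\delta_\lambda\xi\circ\delta_\lambda\xi'$ and $d(\delta_\lambda\xi)=\lambda\,d(\xi)$ for every $\lambda>0$.

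Next, I would localize the problem to the unit quasi-sphere $\Sigma:=\{\xi\in\mathbb{H}^n:d(\xi)=1\}$, which is compact in the Euclidean topology. For $\eta\in\Sigma$, writing $\eta=(x,y,t)$ and $\xi'=(x',y',t')$, I would analyse the quantity $\Phi(\eta,\xi'):=d(\eta\circ\xi')-d(\xi')$ through the explicit formula
\[
d(\eta\circ\xi')^4=\bigl((x+x')^2+(y+y')^2\bigr)^2+\bigl(t+t'+\tfrac{1}{2}(xy'-x'y)\bigr)^2,
\]
and show that $\Phi(\eta,\cdot)$ is bounded on $\mathbb{H}^n$ by a constant independent of $\eta\in\Sigma$. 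On bounded subsets of $\xi'$ this is immediate from continuity together with compactness of $\Sigma$. For $d(\xi')\to\infty$, a case split according to whether $|x'|^2+|y'|^2$ or $|t'|$ dominates in $d(\xi')^4$ shows that the displayed expression equals $d(\xi')^4$ up to corrections of strictly smaller homogeneous degree, hence $d(\eta\circ\xi')=d(\xi')+O(1)$ uniformly in $\eta\in\Sigma$. Let $\beta_0$ denote the resulting uniform upper bound on $\Phi$.

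Finally, to pass from $\eta\in\Sigma$ to a general nonzero $\xi\in\mathbb{H}^n$, I would set $\lambda:=1/d(\xi)$ and $\eta:=\delta_\lambda\xi\in\Sigma$. Applying the previous bound to the pair $(\eta,\delta_\lambda\xi')$ and using that $\delta_\lambda$ is a group automorphism, one obtains
\[
\lambda\,d(\xi\circ\xi')=d(\delta_\lambda\xi\circ\delta_\lambda\xi')\leq\beta_0+\lambda\,d(\xi'),
\]
which, after multiplication by $d(\xi)=1/\lambda$, yields $d(\xi\circ\xi')\leq\beta_0\,d(\xi)+d(\xi')$. The trivial cases $\xi=0$ or $\xi'=0$ are verified directly, and setting $\beta:=\max(\beta_0,1)$ guarantees $\beta\geq 1$ as required.

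The main obstacle is the uniform-in-$\eta$ asymptotic estimate $\Phi(\eta,\xi')=O(1)$ for unbounded $\xi'$: the symplectic cross-term $\tfrac{1}{2}(xy'-x'y)$ in the vertical slot can be of order $|\xi'|$ in the Euclidean sense, and one must exploit that the $t$-coordinate carries homogeneous weight $2$ in the definition of $d$ in order to demote this cross-term to genuinely lower order when measured in the $d$-scale. Once this uniform estimate is secured, the dilation-rescaling step is routine and delivers the stated inequality.
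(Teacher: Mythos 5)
Your argument is correct, but there is nothing in the paper to compare it against: the proposition is imported verbatim from Bonfiglioli--Lanconelli--Uguzzoni (Proposition 5.14.1) and the paper supplies no proof of its own. On its merits, your homogeneity-plus-compactness scheme works, and the step you flag as the main obstacle does go through uniformly in $\eta=(x,y,t)\in\Sigma$ without any case split: writing $A=|x'|^2+|y'|^2$ and $B=|t'|$, so that $A^2+B^2=d(\xi')^4=:\rho^4$, and using $|x|^2+|y|^2\le1$, $|t|\le1$ together with Cauchy--Schwarz for the symplectic term, $|xy'-x'y|\le|x|\,|y'|+|x'|\,|y|\le\sqrt{A}$, one obtains
\[
d(\eta\circ\xi')^4\le\left(\sqrt{A}+1\right)^4+\left(B+1+\tfrac{1}{2}\sqrt{A}\right)^2\le\rho^4+C(1+\rho)^3,
\]
whence $d(\eta\circ\xi')\le\rho+O(1)$ by $(1+u)^{1/4}\le1+u/4$; the dilation rescaling and the trivial cases $\xi=0$, $\xi'=0$ are handled correctly. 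Two remarks for perspective. First, the standard textbook route proves the symmetric pseudo-triangle inequality $d(\xi\circ\xi')\le c\,(d(\xi)+d(\xi'))$ by a pure compactness argument on the compact set $\{d(\xi)+d(\xi')=1\}$, with no asymptotic analysis; that weaker statement would already suffice for the paper's application (replacing $\beta+1$ by $2c$ in the Heisenberg Hardy inequality). Your asymmetric version, with the constant attached to only one summand, is exactly what is stated, and it genuinely requires the extra uniform estimate on the non-compact set $\Sigma\times\mathbb{H}^n$ that you identified. Second, for the Koranyi--Folland gauge specifically a stronger fact is known (Cygan): $d$ satisfies the genuine triangle inequality, so one may even take $\beta=1$; your proof does not recover this sharp constant, but the proposition does not ask for it.
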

Let us define the ball $B(\xi, r)$ with center $\xi$ and radius $r$ as follows
\begin{equation*}
    B(\xi, r):=\{\xi'\in\mathbb{H}^n: d(\xi^{-1}\circ\xi')<r\}.
\end{equation*}
Also, $\omega_n$ denotes the Lebesgue measure of $B(0, 1)$, i.e. $\omega_n:=|B(0, 1)|$. 
\begin{prop}[\cite{Bonfiglioli}, Proposition 5.4.4]
    Let $\mathbb{H}^n$ be a Heisenberg group of homogeneous dimension $Q$, and $d(\cdot)$ be the Koranyi-Folland distance. Let $f$ be a function defined on $B(0, r)$. If $f\in L^1(B(0,r))$, we have
    \begin{equation}\label{polardeconHeisenberg}
        \int_{B(0, r)}f(d(\xi))\,d\xi=Q\omega_n\int_0^rs^{Q-1}f(s)\,ds. 
        \end{equation}
\end{prop}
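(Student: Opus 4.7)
The plan is to reduce this to the general polar decomposition for homogeneous groups that is already available in the paper, namely \eqref{polar decomposition of homogenuous groups}. Since $\mathbb{H}^n$ is a homogeneous group under the anisotropic dilations $D_\lambda(x,y,t)=(\lambda x,\lambda y,\lambda^2 t)$, and since one readily checks that the Koranyi--Folland distance satisfies $d(D_\lambda\xi)=\lambda\,d(\xi)$, the function $|\cdot|:=d(\cdot)$ is a homogeneous quasi-norm on $\mathbb{H}^n$ (the triangle inequality with constant $\beta$ being recorded in Proposition \ref{tringle ineq of Koranyi-Folland measure}). Hence there exists a Borel measure $\sigma$ on $\mathfrak{S}=\{\xi:d(\xi)=1\}$ such that, for every $g\in L^1(\mathbb{H}^n)$,
\begin{equation*}
\int_{\mathbb{H}^n} g(\xi)\,d\xi=\int_0^\infty\int_{\mathfrak{S}} g(sy)\,s^{Q-1}\,d\sigma(y)\,ds.
\end{equation*}

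Next I would specialise this identity to the integrand $g(\xi):=f(d(\xi))\,\chi_{B(0,r)}(\xi)$, which is in $L^1(\mathbb{H}^n)$ by hypothesis. Because $d(sy)=s$ for $y\in\mathfrak{S}$, the indicator reduces to $\chi_{(0,r)}(s)$ and $g(sy)=f(s)$ is independent of $y$, so the inner integral collapses to $|\mathfrak{S}|\,f(s)$. This gives
\begin{equation*}
\int_{B(0,r)}f(d(\xi))\,d\xi=|\mathfrak{S}|\int_0^r s^{Q-1}f(s)\,ds.
\end{equation*}

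It then only remains to identify the total mass $|\mathfrak{S}|$ with $Q\omega_n$. For this I would apply the same identity with $f\equiv 1$ and with $r=1$: the left-hand side becomes $|B(0,1)|=\omega_n$, while the right-hand side is $|\mathfrak{S}|\int_0^1 s^{Q-1}\,ds=|\mathfrak{S}|/Q$. Equating the two yields $|\mathfrak{S}|=Q\omega_n$, and substituting back gives the claimed formula \eqref{polardeconHeisenberg}.

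There is no real obstacle here: everything is a direct specialisation of \eqref{polar decomposition of homogenuous groups} to radial integrands, with the only bookkeeping being the verification that $d$ is homogeneous of degree one (immediate from its explicit form) and the evaluation of the surface measure constant through the unit ball. The argument works verbatim for any homogeneous Lie group once a homogeneous quasi-norm has been fixed.
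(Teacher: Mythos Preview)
Your argument is correct, but there is nothing to compare it against: the paper does not prove this proposition at all. It is simply quoted from \cite{Bonfiglioli} (Proposition 5.4.4) and used as a black box in the subsequent computations. So you have supplied a valid proof where the paper gives none.

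Your derivation is the natural one: specialise the general polar decomposition \eqref{polar decomposition of homogenuous groups} for homogeneous groups to the Heisenberg group with the Koranyi--Folland norm, observe that radial integrands factor, and identify the surface constant $|\mathfrak{S}|$ via $f\equiv 1$, $r=1$. Each step is clean; in particular the homogeneity $d(D_\lambda\xi)=\lambda\,d(\xi)$ is immediate from the explicit formula for $d$, and the reduction of $g(sy)$ to $f(s)\chi_{(0,r)}(s)$ is exactly right once one remembers that $sy$ stands for the dilate $D_s(y)$. This is essentially how the result is proved in \cite{Bonfiglioli} as well, and, as you note, the argument works for any homogeneous group and any homogeneous quasi-norm.
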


For a measurable function $u:\mathbb{H}^{n}\rightarrow\mathbb{R}$, the Gagliardo seminorm is defined by
\begin{equation*}
    [u]_{s,p}:=\left(\int_{\mathbb{H}^n}\int_{\mathbb{H}^n}\frac{|u(\xi)-u(\xi')|^p}{d(\xi^{-1}\circ \xi')^{sp+Q}}\, d\xi' d\xi\right)^{\frac{1}{p}}. 
\end{equation*}
For $p>1$, and $s\in(0,1)$, the fractional Sobolev space on $\mathbb{H}^n$ is
\begin{equation*}
     W^{s,p}(\mathbb{H}^{n}):=\left\{u\in L^p(\mathbb{H}^n): [u]_{s,p}<\infty\right\},
\end{equation*}
equipped with the norm
\begin{equation*}
     \|u\|_{W^{s,p}(\mathbb{H}^n)}:=\|u\|_{L^p(\mathbb{H}^n)}+[u]_{s,p}, \quad u\in W^{s,p}(\mathbb{H}^n).
\end{equation*}
\begin{theorem}[Fractional Hardy inequality]
    Let $\mathbb{H}^n$ be a Heisenberg group of homogeneous dimension $Q$ and let $d(\cdot)$ be the Koranyi-Folland distance on $\mathbb{H}^n$. Suppose that $s>0$, $p>1$, and $Q<sp$. Then for all $u\in W^{s,p}(\mathbb{H}^{n})$, we have the following inequality
    \begin{equation}
    \int_{\mathbb{H}^n}\frac{|u(\xi)|^p}{d(\xi)^{sp}}\, d\xi\leq \frac{(\beta+1)^{sp+Q}(sp+Qp-Q)^p}{\omega_{n}(sp-Q)^p} [u]_{s,p}^p.
\end{equation}
\begin{proof}
    Similarly to the proof of Theorem \ref{frac Hardy ineq}, by using the Minokwski inequality, we have
    \begin{equation}\label{I1+I2}
        \begin{split}
             \left(\int_{\mathbb{H}^n}\frac{|u(\xi)|^p}{d(\xi)^{sp}}\, d\xi\right)^{\frac{1}{p}}&=\left(\int_{\mathbb{H}^n}\frac{1}{d(\xi)^{sp}}\left|u(\xi)-\frac{1}{|B(0, d(\xi)|}\int_{B(0, d(\xi))}u(\xi')\,d\xi'\right.\right.\\
             &+\left.\left.\frac{1}{B(0, d(\xi))}\int_{B(0, d(\xi))}u(\xi')\,d\xi'\right|^p\,d\xi\right)^{\frac{1}{p}}\\
             &\leq\left(\int_{\mathbb{H}^n}\frac{1}{d(\xi)^{sp}}\left|u(\xi)-\frac{1}{|B(0, d(\xi))|}\int_{B(0, d(\xi))}u(\xi')\,d\xi'\right|^p\,d\xi\right)^{\frac{1}{p}}\\
             &+\left(\int_{\mathbb{H}^n}\frac{1}{d(\xi)^{sp}|B(0, d(\xi))|^p}\left|\int_{B(0,d(\xi))}u(\xi')\,d\xi'\right|^p\,d\xi\right)^{\frac{1}{p}}=I_1+I_2, 
        \end{split}
    \end{equation}
    where
    \begin{equation*}
        I_1=\left(\int_{\mathbb{H}^n}\frac{1}{d(\xi)^{sp}}\left|u(\xi)-\frac{1}{|B(0, d(\xi))|}\int_{B(0, d(\xi))}u(\xi')\, d\xi'\right|^p\, d\xi\right)^{\frac{1}{p}}, 
    \end{equation*}
    and 
    \begin{equation*}
        I_2=\left(\int_{\mathbb{H}^n}\frac{1}{d(\xi)^{sp}|B(0, d(\xi))|^p}\left|\int_{B(0,d(\xi))}u(\xi')\, d\xi'\right|^p\, d\xi\right)^{\frac{1}{p}}.
    \end{equation*}
    By using Proposition \ref{tringle ineq of Koranyi-Folland measure} together with the property $d(\xi^{-1})=d(\xi)$, we obtain 
    \begin{equation}\label{trialngleineq}
        d(\xi^{-1}\circ\xi')\leq\beta d(\xi)+d(\xi')\leq\beta d(\xi)+d(\xi)=(\beta+1)d(\xi),
    \end{equation}
    for all $\xi'\in B(0, d(\xi))$, where $\beta\geq1$. Since $|B(0,r_1)|\leq|B(0,r_2)|$ for $0<r_1\leq r_2$, we have
    \begin{equation}\label{measureofbollsonHeisenberg}
        \left|B\left(0, \frac{d(\xi^{-1}\circ\xi')}{\beta+1}\right)\right|\leq|B(0, d(\xi))|. 
    \end{equation}
    By Hölder’s inequality, we have
    \begin{equation}\label{HolderonHisenberg}
        \left(\int_{B(0,d(\xi))}\left|u(\xi)-u(\xi')\right|\,d\xi'\right)^p\leq\left(\int_{B(0, d(\xi))}|u(\xi)-u(\xi')|^p\,d\xi'\right)\left|B(0,d(\xi))\right|^{p-1}.
    \end{equation}
    Therefore, we obtain
    \begin{equation*}
        \begin{split}
         I_1&=\left(\int_{\mathbb{H}^n}\frac{1}{d(\xi)^{sp}|B(0, d(\xi))|^p}\left|u(\xi)|B(0, d(\xi))|-\int_{B(0, d(\xi))}u(\xi')\, d\xi'\right|^p\, d\xi\right)^{\frac{1}{p}}\\
         &=\left(\int_{\mathbb{H}^n}\frac{1}{d(\xi)^{sp}|B(0, d(\xi))|^p}\left|\int_{B(0, d(\xi))}(u(\xi)-u(\xi'))\, d\xi'\right|^p\, d\xi\right)^{\frac{1}{p}}\\
         &\stackrel{\eqref{HolderonHisenberg}}{\leq}\left(\int_{\mathbb{H}^n}\frac{1}{d(\xi)^{sp}|B(0, d(\xi))|}\int_{B(0, d(\xi))}|u(\xi)-u(\xi')|^p\, d\xi'\, d\xi\right)^{\frac{1}{p}}\\
         &\stackrel{\eqref{trialngleineq}, \eqref{measureofbollsonHeisenberg}}{\leq}(\beta+1)^s\left(\int_{\mathbb{H}^n}\int_{B(0, d(\xi))}\frac{|u(\xi)-u(\xi')|^p}{d(\xi^{-1}\circ\xi')^{sp}|B(0, \frac{d(\xi^{-1}\circ\xi')}{\beta+1})|}\, d\xi'\, d\xi\right)^{\frac{1}{p}}\\
         &\leq(\beta+1)^s\left(\int_{\mathbb{H}^n}\int_{\mathbb{H}^n}\frac{|u(\xi)-u(\xi')|^p}{d(\xi^{-1}\circ\xi')^{sp}|B(0, \frac{d(\xi^{-1}\circ\xi')}{\beta+1})|}\, d\xi'\, d\xi\right)^{\frac{1}{p}}. 
        \end{split}
    \end{equation*}
    By noting that $|B(0, r)|=\omega_{n}\, r^Q$ (cf. \eqref{polardeconHeisenberg}), we get the following estimate
    \begin{equation}\label{I1}
        I_1\leq\frac{(\beta+1)^{s+\frac{Q}{p}}}{\omega_{n}^{\frac{1}{p}}}\left(\int_{\mathbb{H}^n}\int_{\mathbb{H}^n}\frac{|u(\xi)-u(\xi')|^p}{d(\xi^{-1}\circ\xi')^{sp+Q}}\, d\xi'\, d\xi\right)^{\frac{1}{p}}.
    \end{equation}
    Now, let us check the condition \eqref{Condition of the int Hardy ineq} for the functions $g(\xi)=\frac{1}{d(\xi)^{sp}|B(0, d(\xi))|^p}$ and $h(\xi)=\frac{1}{d(\xi)^{sp}}$. We have
    \begin{equation*}
    \begin{split}
        &\int_{\mathbb{H}^n\backslash B(0, d(\xi))}d(\xi')^{-sp}|B(0, d(\xi'))|^{-p}d\xi'=\omega_n^{-p}\int_{d(\xi')\geq d(\xi)}d(\xi')^{-sp-Qp}\,d\xi'\\
        &= \omega_n^{-p}\sum_{k=1}^{+\infty}\int_{2^{k-1}d(\xi)\leq d(\xi')\leq 2^k d(\xi)}d(\xi')^{-sp-Qp}\,d\xi'\stackrel{\eqref{polardeconHeisenberg}}{=}Q\omega_n^{1-p}\sum_{k=1}^{+\infty}\int_{2^{k-1}d(\xi)}^{2^kd(\xi)}r^{-sp-Qp+Q-1}\,dr\\
        &=\frac{Q\omega_n^{1-p}}{-sp-Qp+Q}\left(1-\frac{1}{2^{-sp-Qp+Q}}\right)d(\xi)^{-sp-Qp+Q}\sum_{k=1}^{+\infty}2^{k(-sp-Qp+Q)}\\
        &=\frac{Q\omega_n^{1-p}}{sp+Qp-Q}d(\xi)^{-sp-Qp+Q},
        \end{split}
    \end{equation*}
    \begin{equation*}
            \int_{B(0, d(\xi))}d(\xi')^{-sp(1-p')}\,d\xi'\stackrel{\eqref{polardeconHeisenberg}}{=}Q\omega_n\int_0^{d(\xi)}r^{sp'+Q-1}\,dr=\frac{Q\omega_nd(\xi)^{sp'+Q}}{sp'+Q}.
    \end{equation*}
    Then, we can verify that $D_1=\frac{Q(p-1)^{\frac{1}{p'}}}{sp+Qp-Q}<\infty$, and $(p')^{\frac{1}{p'}}p^{\frac{1}{p}}D_1=\frac{Qp}{sp+Qp-Q}<1$. By applying the integral Hardy inequality \eqref{integralHardyinequalityonmetricmeasurespaces}, we have
    \begin{equation*}\label{I2}
        I_2\leq\frac{Qp}{sp+Qp-Q}\left(\int_{\mathbb{H}^n}\frac{|u(\xi)|^p}{d(\xi)^{sp}}\, d\xi\right)^{\frac{1}{p}}. 
    \end{equation*}
    By putting \eqref{I1} and \eqref{I2} to \eqref{I1+I2}, we obtain
    \begin{equation*}
    \left(\int_{\mathbb{H}^n}\frac{|u(\xi)|^p}{d(\xi)^{sp}}\, d\xi\right)^{\frac{1}{p}}\leq \frac{(\beta+1)^{s+\frac{Q}{p}}(sp+Qp-Q)}{\omega_{n}^{\frac{1}{p}}(sp-Q)} \left(\int_{\mathbb{H}^n}\int_{\mathbb{H}^n}\frac{|u(\xi)-u(\xi')|^p}{d(\xi^{-1}\circ \xi')^{sp+Q}}\, d\xi' d\xi\right)^{\frac{1}{p}},
    \end{equation*}
    which completes the proof. 
\end{proof}
\begin{rem}
    The fractional Hardy inequality on the Heisenberg group for $Q>sp$ was proved in \cite{Adimurthi}. Also, if $p=2$, the best constant was obtained in \cite{Roncal}. 
\end{rem}
\end{theorem}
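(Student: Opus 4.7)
The plan is to mimic the proof of Theorem \ref{frac Hardy ineq}, but carefully track how the Koranyi--Folland quasi-triangle inequality from Proposition \ref{tringle ineq of Koranyi-Folland measure} replaces the ordinary triangle inequality. The key observation is that $d(\xi^{-1}\circ\xi')\leq\beta d(\xi)+d(\xi')\leq(\beta+1)d(\xi)$ whenever $\xi'\in B(0,d(\xi))$, so the constant $2$ appearing implicitly in \eqref{8}--\eqref{9} must be replaced by $\beta+1$.

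First I would apply Minkowski's inequality after adding and subtracting the ball average $|B(0,d(\xi))|^{-1}\int_{B(0,d(\xi))}u(\xi')\,d\xi'$ to split
\[
\left(\int_{\mathbb{H}^n}\frac{|u(\xi)|^p}{d(\xi)^{sp}}\,d\xi\right)^{\frac{1}{p}}\leq I_1+I_2,
\]
exactly as in \eqref{4}, where $I_1$ captures the deviation from the ball average and $I_2$ is the ball-average term.

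For $I_1$, I would apply H\"older's inequality inside $B(0,d(\xi))$ with $v\equiv 1$, then invoke the quasi-triangle inequality to replace $d(\xi)^{-sp}|B(0,d(\xi))|^{-1}$ by $(\beta+1)^{sp}d(\xi^{-1}\circ\xi')^{-sp}\bigl|B\bigl(0,d(\xi^{-1}\circ\xi')/(\beta+1)\bigr)\bigr|^{-1}$, extend the inner integration to all of $\mathbb{H}^n$, and finally use $|B(0,r)|=\omega_n r^Q$ from \eqref{polardeconHeisenberg} to convert the ball measure at radius $d(\xi^{-1}\circ\xi')/(\beta+1)$ back to $d(\xi^{-1}\circ\xi')^Q$. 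This produces the bound $I_1\leq(\beta+1)^{s+Q/p}\omega_n^{-1/p}[u]_{s,p}$, and it is here that the factor $(\beta+1)^{sp+Q}$ in the final constant emerges.

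For $I_2$, I would apply the integral Hardy inequality (Theorem \ref{2.1}) with the choices $g(\xi)=d(\xi)^{-sp}|B(0,d(\xi))|^{-p}$ and $h(\xi)=d(\xi)^{-sp}$. Using the polar decomposition \eqref{polardeconHeisenberg} together with a dyadic decomposition of $\mathbb{H}^n\setminus B(0,d(\xi))$, I would compute the two integrals defining $D_1$ as explicit functions of $d(\xi)$ to verify $D_1=\frac{Q(p-1)^{1/p'}}{sp+Qp-Q}<\infty$ and, using the assumption $Q<sp$, the smallness condition $(p')^{1/p'}p^{1/p}D_1=\frac{Qp}{sp+Qp-Q}<1$. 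Theorem \ref{2.1} then yields $I_2\leq\frac{Qp}{sp+Qp-Q}\bigl(\int_{\mathbb{H}^n}|u|^p/d(\xi)^{sp}\,d\xi\bigr)^{1/p}$, and absorbing this term on the left contributes the denominator $(sp-Q)^p$ in the final constant.

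The main obstacle is that Theorem \ref{frac Hardy ineq} cannot be applied as a black box: its estimates \eqref{8}--\eqref{9} rely on the genuine triangle inequality $d(x,y)\leq|x|_a+|y|_a$, which fails on $\mathbb{H}^n$ with the Koranyi--Folland distance. The whole argument must be redone so that the constant $2^{sp+Q}$ is replaced by $(\beta+1)^{sp+Q}$; the remaining bookkeeping is essentially the computation of $D_1$ via polar decomposition, which is routine thanks to the explicit form $|B(0,r)|=\omega_n r^Q$.
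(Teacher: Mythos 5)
Your proposal is correct and follows essentially the same route as the paper's own proof: the same Minkowski splitting into $I_1+I_2$, the same H\"older estimate and quasi-triangle replacement of $2$ by $\beta+1$ for $I_1$, and the same application of the integral Hardy inequality with $g(\xi)=d(\xi)^{-sp}|B(0,d(\xi))|^{-p}$, $h(\xi)=d(\xi)^{-sp}$ and the dyadic computation of $D_1$ for $I_2$. Your remark that the metric-space theorem cannot be invoked as a black box because the Koranyi--Folland distance only satisfies a quasi-triangle inequality is exactly the point the paper's proof addresses by redoing the argument.
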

\begin{theorem}[Fractional Hardy-Sobolev type inequality]
    Let $\mathbb{H}^n$ be a Heisenberg group of homogeneous dimension $Q$ and let $d(\cdot)$ be the Koranyi-Folland distance on $\mathbb{H}^n$. Suppose that $s>0$, $1<p,q<\infty$, and $Q<sp$. Then the following inequality holds for all all $u\in W^{s,p}(\mathbb{H}^{n})$
    \begin{equation*}
    \int_{\mathbb{H}^n}\frac{|u(\xi)|^q}{d(\xi)^{sq}}\, d\xi \leq \frac{(\beta+1)^{sq+\frac{Qq}{p}}(sq+Qq-Q)^q}{\omega_{n}^{\frac{q}{p}}(sq-Q)^q}  \int_{\mathbb{H}^n}\left(\int_{\mathbb{H}^n}\frac{|u(\xi)-u(\xi')|^p}{d(\xi^{-1}\circ \xi')^{sp+Q}}\, d\xi'\right)^{\frac{q}{p}} d\xi.
    \end{equation*}
\end{theorem}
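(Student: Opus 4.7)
The plan is to follow the proof pattern of Theorem \ref{theo 3.2} (fractional Hardy--Sobolev on polarisable metric measure spaces) with $\mathbb{X}=\mathbb{H}^n$, $a=0$, $d(\xi,\xi')=d(\xi^{-1}\circ\xi')$, and the trivial weight choice $v(\xi)\equiv z(\xi)\equiv 1$, replacing the Euclidean ``triangle factor'' $2$ by the Koranyi--Folland quasi-triangle constant $\beta+1$, exactly as was just done for the fractional Hardy inequality on $\mathbb{H}^n$.

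First I would verify the structural quantities. With $v=z=1$ and by \eqref{polardeconHeisenberg} one has $V(\xi)=|B(0,d(\xi))|=\omega_n d(\xi)^Q$, and the exponents in the definition of $A$ collapse (since $-q/p-q/p'+q=0$) to give $A(\xi)\equiv 1$. Then, applying Minkowski's inequality, I would split
\[
\left(\int_{\mathbb{H}^n}\frac{|u(\xi)|^q}{d(\xi)^{sq}}\,d\xi\right)^{1/q}\le I_1+I_2,
\]
where $I_1$ measures the deviation from the ball average on $B(0,d(\xi))$ and $I_2$ is the weighted norm of the average itself, exactly as in \eqref{16}.

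For $I_1$, I would apply H\"older's inequality with exponents $p,p'$ inside the inner ball integral (the $v=z=1$ version of \eqref{19}) and then invoke Proposition \ref{tringle ineq of Koranyi-Folland measure} to deduce $d(\xi^{-1}\circ\xi')\le(\beta+1)d(\xi)$ for $\xi'\in B(0,d(\xi))$. Together with the monotonicity of ball volumes and $|B(0,r)|=\omega_n r^Q$, this lets me insert the kernel $d(\xi^{-1}\circ\xi')^{-sp-Q}$ at the cost of a factor $(\beta+1)^{s+Q/p}\omega_n^{-1/p}$, and extending the inner domain to all of $\mathbb{H}^n$ gives
\[
I_1\le\frac{(\beta+1)^{s+Q/p}}{\omega_n^{1/p}}\left(\int_{\mathbb{H}^n}\left(\int_{\mathbb{H}^n}\frac{|u(\xi)-u(\xi')|^p}{d(\xi^{-1}\circ\xi')^{sp+Q}}\,d\xi'\right)^{q/p} d\xi\right)^{1/q}.
\]

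For $I_2$, I would apply Theorem \ref{2.1} with $g(\xi)=1/(d(\xi)^{sq}V^q(\xi))$ and $h(\xi)=1/d(\xi)^{sq}$, verifying \eqref{Condition of the int Hardy ineq} by the same dyadic decomposition of $\mathbb{H}^n\setminus B(0,d(\xi))$ used above in the preceding Heisenberg fractional Hardy proof (with $p$ replaced by $q$). This yields $D_1=Q(q-1)^{1/q'}/(sq+Qq-Q)$ and, under $Q<sq$, the required smallness $(q')^{1/q'}q^{1/q}D_1=Qq/(sq+Qq-Q)<1$, hence
\[
I_2\le\frac{Qq}{sq+Qq-Q}\left(\int_{\mathbb{H}^n}\frac{|u(\xi)|^q}{d(\xi)^{sq}}\,d\xi\right)^{1/q}.
\]
Absorbing $I_2$ to the left contributes a factor $(sq+Qq-Q)/(sq-Q)$, and raising to the $q$-th power then produces the announced constant $(\beta+1)^{sq+Qq/p}(sq+Qq-Q)^q/(\omega_n^{q/p}(sq-Q)^q)$. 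The main bookkeeping hurdle is keeping the exponents of $(\beta+1)$ and $\omega_n$ straight when the inner H\"older bound (at exponent $p$) is raised to the power $q/p$ and then integrated against the outer $q$-norm; everything else is a direct transcription of the corresponding steps from Theorem \ref{theo 3.2} with $2\mapsto\beta+1$.
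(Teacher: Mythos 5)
Your proposal is correct and is exactly the argument the paper intends: the paper's own ``proof'' is a one-line reference to repeating the preceding Heisenberg fractional Hardy argument with the machinery of Theorem \ref{theo 3.2}, and your computations of $A\equiv 1$, the factor $(\beta+1)^{s+Q/p}\omega_n^{-1/p}$ in the bound for $I_1$, and $D_1=Q(q-1)^{1/q'}/(sq+Qq-Q)$ all check out and reproduce the stated constant. Note that, as your absorption step correctly shows, the condition actually required is $Q<sq$ rather than the stated $Q<sp$ (consistent with the $(sq-Q)^q$ in the denominator of the constant and with the paper's own remark after the homogeneous-group version), so you have in effect also caught a slip in the theorem's hypotheses.
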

\begin{proof}
    The proof of this theorem is similar to the proof of the previous one. Here, we follow the techniques used in Theorem \ref{theo 3.2} instead of Theorem \ref{frac Hardy ineq}. 
 \end{proof}
\medskip

\end{document}